\documentclass[12pt,reqno]{amsart}
\usepackage[foot]{amsaddr}
\usepackage{amsmath}
\usepackage{amssymb}
\usepackage{verbatim}
\usepackage{bbm}
\usepackage{thmtools}
\usepackage{graphicx}
\usepackage[font=footnotesize]{caption}
\usepackage{subcaption}
\usepackage{epstopdf}
\usepackage{multirow}
\usepackage{url}
\usepackage{makecell}
\usepackage{tabularx}
\usepackage{longtable}
\usepackage{graphicx}
\usepackage[percent]{overpic}
\usepackage{tikz}
\usepackage{pgfplots}
\usepackage{xcolor}
\usepackage{pgfplots}
\usepackage{pgfplotstable}
\usepackage{wavelet}
\allowdisplaybreaks
\usepackage[toc]{appendix}
\usepackage[capitalise]{cleveref}
\usepackage{color}
\usepackage{cases}
\allowdisplaybreaks

\usepackage[top=0.5in,bottom=0.47in,left=0.7in,right=0.7in]{geometry}

\newtheorem{theorem}{Theorem}
\newtheorem{prop}[theorem]{Proposition}
\newtheorem{example}{Example}

\newcommand{\nv}{\vec{n}} %normal vector

\def\XXint#1#2#3{{\setbox0=\hbox{$#1{#2#3}{\int}$ }
		\vcenter{\hbox{$#2#3$ }}\kern-.6\wd0}}

\renewcommand{\ge}{\geqslant}
\renewcommand{\le}{\leqslant}

\newcommand{\blf}{B} %bilinear form use either a or b.

\numberwithin{equation}{section}
\begin{document}

%\title[Galerkin Scheme Using Wavelets for 1D Elliptic Interface Problems]{Galerkin Scheme Using Biorthogonal Wavelets on Intervals for 1D Elliptic Interface Problems}

\title[Wavelet Galerkin for 1D Elliptic Interface Problems]{
Arbitrarily High-Order Convergence of Wavelet-based Galerkin Scheme for 1D Elliptic Interface Problems
%Wavelet-based Galerkin Scheme with Arbitrarily High-Order Convergence for 1D Elliptic Interface Problems
}

	\author{Bin Han}
	\address{Department of Mathematical and Statistical Sciences, University of Alberta, Edmonton, Alberta, Canada T6G 2N8.}
	\email{bhan@ualberta.ca, mmichell@ualberta.ca}
	
	\author{Michelle Michelle}
	%\address{Department of Mathematics, Purdue University, West Lafayette, IN, USA 47907.}
	%\email{mmichell@purdue.edu}
	
	\thanks{Research supported in part by
		Natural Sciences and Engineering Research Council (NSERC) of Canada under grants RGPIN-2024-04991, RGPIN-2026-05814, and DGECR-2026-00371, the University of Alberta startup grant, and the Digital Research Alliance of Canada.}

%\thanks{\footnotesize{\textbf{Declaration: The authors certify that the general content of the manuscript, in whole or in part, is not submitted, accepted, or published elsewhere, including conference proceedings.}}}

	\makeatletter \@addtoreset{equation}{section} \makeatother

	% The key contribution is the rigourous convergence analysis. Insight how the proof works in higher dimensions. 
	% What key techniques do you use to prove it. 

	\begin{abstract}
	The solution $u$ of an elliptic interface problem in a domain $\Omega$ is often smooth away from the interface $\Gamma\subset \Omega$, but its gradient is discontinuous across $\Gamma$. Consequently, $u$ has low global regularity and generally does not belong to $H^{3/2}(\Omega)$. This paper studies 1D elliptic interface problems using wavelet methods. We propose a Galerkin method based on compactly supported biorthogonal wavelet bases on bounded intervals with approximation order $m$, for any integer $m \ge 2$. Our approach involves incorporating wavelet basis functions from higher scale levels to capture the singularity in the neighbourhood of the interface $\Gamma$. A principal contribution of this paper is a rigorous convergence analysis establishing the optimal orders $m-1$ in the $H^1(\Omega)$-norm and $m$ in the $L^2(\Omega)$-norm. The proof combines careful decay estimates for dual wavelet coefficients associated with wavelets supported away from the interface $\Gamma$ and with those supported in its neighborhood, wavelet characterizations of Sobolev spaces, and standard convergence arguments from the finite element method (FEM). Various numerical experiments are provided to verify the theoretical findings, including a special two-dimensional elliptic interface problem with vertical interface lines.  
	 %we rigorously prove that its convergence rates are of order $m-1$ in the $H^1(\Omega)$-norm and order $m$ in the $L^2(\Omega)$-norm, which are optimal with respect to the scheme's approximation order $m$. %The results in this paper both complement and sharply contrast our findings in \cite{HM24}, where we consider a similar wavelet-based method for solving $d$-dimensional elliptic interface problems with $d\ge 2$.
	\end{abstract}

	\keywords{1D elliptic interface problems, spline biorthogonal wavelets, convergence analysis}
	\subjclass[2020]{35J15, 65T60, 42C40, 41A15}
	\maketitle

	\pagenumbering{arabic}
	
	% To provide the intuition in 1D how we deal with the singularity. N_J/N_{J-1} \rightarrow 2.
	% Shan Zhao and Zhilin Li's papers
	
	\section{Introduction}
	Let $\Omega\subset \R^d$ be a bounded domain. Consider a smooth $(d-1)$-dimensional interface $\Gamma$ inside $\Omega$ such that it splits the whole domain $\Omega$ into two subdomains $\Omega_+$ and $\Omega_-$. Let $\nv$ stand for the unit normal on $\Gamma$ pointing to the subdomain $\Omega_+$.
The $d$-dimensional elliptic interface problem on $\Omega$ considers
%can be formulated as
	%
	\begin{equation} \label{model:0}
	\begin{cases}
	 -\nabla\cdot(a\nabla u) = f & \text{in} \quad \Omega\bs \Gamma,\\
	 \lb u \rb =g \quad \mbox{and} \quad \lb a \nabla u \cdot \nv \rb=g_{\Gamma} & \mbox{on} \quad \Gamma,\\
	u=g_b, & \mbox{on} \quad \partial \Omega,
	\end{cases}
	\end{equation}
	where the variable diffusion coefficient $a \in L^{\infty}(\Omega)$ satisfies $\text{ess-inf}_{x\in \Omega}(a(x)) > 0$, the source term $f\in L^{2}(\Omega)$,  the jump conditions $g \in H^{1/2}(\Gamma)$ and $g_{\Gamma} \in H^{-1/2}(\Gamma)$,
	and the boundary condition $g_b\in H^{1/2}(\partial \Omega)$.
Throughout the paper we use the notations $u_\pm:=u\chi_{\Omega_\pm}$, $a_\pm:=a \chi_{\Omega_\pm}$, and $f_\pm:=f\chi_{\Omega_\pm}$.
The first jump function  $\lb u \rb$ on $\Gamma$ for the solution $u$ to \eqref{model:0} is defined by
\[
\lb u \rb(x):=\lim_{h\to 0^+} u_+(x+h \nv)-\lim_{h\to 0^+}u_-(x-h \nv),\qquad x\in \Gamma.
\]
The second jump condition
$\lb a \nabla u \cdot \nv \rb$ on $\Gamma$ for the flux function $a \nabla u \cdot \nv$ is defined similarly.

	If the first jump condition $g$ is not identically zero, then the solution $u$ to \eqref{model:0} is discontinuous across $\Gamma$.
	For the finite element method (FEM), one typically constructs auxiliary functions to reduce the problem to one with $g=0$ on $\Gamma$ and $g_b=0$ on $\partial \Omega$ in the model problem \eqref{model:0}. Consequently, such an elliptic interface problem with $g=0$ on $\Gamma$ and $g_b=0$ on $\partial \Omega$ can be equivalently rewritten as
	\begin{equation} \label{model:0:delta}
		 -\nabla\cdot(a\nabla u) = f - g_{\Gamma} \delta_\Gamma \quad \text{in} \quad \Omega
		\quad \mbox{with}\quad
		u=0 \quad \mbox{ on } \partial \Omega,
	\end{equation}
where $g_\Gamma \delta_\Gamma$ is the Dirac distribution along the interface $\Gamma$ with weight $g_\Gamma$.

	In most of the literature on elliptic interface problems, rectangular domains $\Omega$ are often considered, and both the diffusion coefficient $a$ and the source term $f$ are assumed to be smooth in the subdomains $\Omega_+$ and $\Omega_-$. However, both $a$ and $f$ can be discontinuous across the interface $\Gamma$. As a result, even with $g=0$ in the first jump condition, the solution $u$ to the problem \eqref{model:0} has a discontinuous gradient across the interface, which results in $u\notin H^{1.5}(\Omega)$, even though $u_+ \in H^{m}(\Omega_+)$ and $u_- \in H^{m}(\Omega_-)$ for a large integer $m$. Specifically, the gradient $\nabla u$ is typically discontinuous across the interface $\Gamma$ if $a$ is discontinuous across $\Gamma$, $f$ is discontinuous across $\Gamma$ (even if $a=1$), or $g_{\Gamma}$ is not identically zero. Without taking into account the geometry of the interface $\Gamma$, standard discretization methods suffer from low convergence orders, which are usually at most $1/2$ in the $H^1(\Omega)$-norm.
	
	Many studies have been devoted to overcoming this issue by introducing various modifications to the finite element, standard or discontinuous Galerkin, finite difference, and other methods, see \cite{Aetal14, BE18, CZZZ17,FHM24,LAH09,HM24,HLL11,GL19,LL94} and many references therein. The one-dimensional elliptic interface problem is specifically addressed in \cite{CZZZ17,LAH09,HLL11}. For the finite difference method (FDM), the stencils near the interface are modified to take into account the geometry of the interface $\Gamma$ and the two given jump conditions $g, g_\Gamma$ on $\Gamma$ in \eqref{model:0}. The highest order of FDMs for the 2D elliptic interface problem \eqref{model:0}  is probably known to be 6 in the $L^\infty(\Omega)$-norm, which is achieved by using $13$-point stencils near $\Gamma$ and $9$-point compact stencils everywhere else \cite{FHM24}. See \cite{FHM24} for a comprehensive list of references for using FDMs to solve elliptic interface problems \eqref{model:0}. Meanwhile, a key idea in the FEM is to modify the elements near the interface, which often leads to nonconforming/discontinuous elements. Many modified FEMs achieve the first-order convergence in the $H^1(\Omega)$-norm and the second-order convergence in the $L^2(\Omega)$-norm for the 2D elliptic interface problems \eqref{model:0:delta}. See \cite{BE18,HM24,HLL11,GL19,LL94} for a large body of work on the numerical solution of the elliptic interface problems \eqref{model:0:delta} (i.e., \eqref{model:0} with $g=0$ on $\Gamma$ and $g_b=0$ on $\partial \Omega$) using the FEM. The rigorous convergence analysis of modified FEMs for the elliptic interface problems \eqref{model:0:delta} has been studied in the literature, whereas such analysis for modified FDMs remains scarce. To date, establishing high-order modified FDMs (with convergence rates exceeding $6$th-order in the $L^\infty(\Omega)$-norm) and FEMs (with convergence rates exceeding $2$nd-order in the $L^2(\Omega)$-norm) remains a challenging and important task.

	Over the years, wavelets and framelets have been widely used in signal and image processing, data analysis, and the numerical solution of differential equations (e.g., see \cite{hanbook} and references therein). In this paper, we are interested in the wavelet-based Galerkin method for discretizing the model problem \eqref{model:0:delta}. 
	%Wavelet-based methods have been used to solve PDEs numerically for decades. 
	Some of their advantages are that they are conforming meshless methods, and due to the well-behaved nature of the underlying basis, the condition numbers of the coefficient matrices are uniformly bounded, regardless of their size. The authors of \cite{cdd01} introduced the influential adaptive wavelet method for the numerical solution of general PDEs. See \cite{cdd01,dku99,HM21,HM24,HM25} and references therein for the construction of wavelets on the bounded intervals and their applications to numerical PDEs. The results in \cite[Section~3.1 and Proposition~3.2]{cdd01} state that if the solution $u$ belongs to the Besov space $B^{sd+1}_\tau (L^\tau(\Omega))$ with $\frac{1}{\tau}=s+\frac{1}{2}$, then the adaptive wavelet method in \cite{cdd01}, applied to the elliptic interface problem \eqref{model:0:delta}, achieves a convergence rate of $\bo(n^{-s})$ in the $H^1(\Omega)$-norm for adaptively selected $n$ wavelet elements (see \cite[Proposition~3.2]{cdd01} for details) and particularly $\bo(h^{sd})$ for $n=\bo(h^{-d})$, where $h$ is the mesh size. Because the solution $u$ of the elliptic interface problem \eqref{model:0:delta} has a discontinuous gradient but $u$ still has piecewise regularity, the solution $u$ belongs to $H^{1.5-\delta}(\Omega)$ for every $\delta >0$. Note that $H^{3/2-\delta}(\Omega) = B^{3/2-\delta}_2(L^{2}(\Omega)) \hookrightarrow B^{1+sd}_{\tau}(L^{\tau}(\Omega))$, where $sd+1 < 1.5-\delta$ (i.e., $sd<0.5$) and $\tfrac{1}{\tau} = s+ \tfrac{1}{2}$. Thus, the adaptive wavelet method in \cite{cdd01} can only claim a proved convergence rate of $\bo(h^{\gep})$ in the $H^1(\Omega)$-norm, where $\gep<0.5$ is any fixed constant. This naturally motivates us to ask the following question: %particular one-dimensional question:
	
	\textbf{Question:}
	\emph{Consider the following 1D elliptic interface problem (i.e., \eqref{model:0:delta} with $d=1$):
		\begin{equation} \label{model}
		 -(au')' = f - g_{\Gamma} \delta_{\Gamma} \quad \text{in} \quad \Omega:=(0,1)
		 \quad \text{with} \quad
		 u(0) = u(1)=0,
		\end{equation}
	where $\Gamma$ is a finite subset of $\Omega$, $g_{\Gamma}: \Gamma \rightarrow \R$, and functions $a,f$ are smooth enough in $\Omega\setminus \Gamma$.
%$\Omega_+$ and $\Omega_-$ with $\Omega_- := (0,\Gamma)$ and $\Omega_+ := (\Gamma,1)$.
Let $m\ge 2$ be an integer. Assume that %$u_+ \in H^{m}(\Omega_+)$ and  $u_- \in H^{m}(\Omega_-)$,
$u\in H^m(\Omega\setminus \Gamma)$, but $u\notin H^{1.5}(\Omega)$ due to its discontinuous gradient across $\Gamma$.
Moreover, the coefficient $a$ often has sharp contrast across points in $\Gamma$ and some points in $\Gamma$ could be very close to each other causing difficulties even for current advanced FEMs and FDMs for elliptic interface problems.
 Is it possible to design a wavelet method that achieves convergence rates of $m-1$ and $m$ in $H^{1}(\Omega)$ and $L^2(\Omega)$ norms respectively (i.e., optimal convergence rates with respect to the approximation order $m$ of the underlying scheme)?}
	
	\textbf{Answer:} \textit{Yes.}
	
	The main contributions of this paper are threefold:
	\begin{itemize}
	\item[(1)] We rigorously justify the positive answer to the above question with a self-contained proof. Our wavelet Galerkin method relies on a simple yet effective strategy, which essentially involves placing more wavelet basis functions from higher scale levels that touch the interface $\Gamma$ to capture the singularity. The analysis relies heavily on the properties of the compactly supported dual wavelets. More specifically, the proof combines careful decay estimates for dual wavelet coefficients associated with wavelets supported away from the interface $\Gamma$ and with those supported in its neighborhood, wavelet characterizations of Sobolev spaces, and standard convergence arguments from the finite element method (FEM).
	 \item[(2)] This paper aims to complement and contrast our findings in \cite{HM24}, where a similar strategy is used to solve $d$-dimensional elliptic interface problems. The 1D setting warrants a separate treatment due to the possibility of achieving arbitrarily high convergence rates. Furthermore, it is studied independently to provide a clear exposition of the main ideas behind our method and its convergence proof. 
	\item[(3)] We provide some numerical experiments to validate our theoretical findings and demonstrate the performance of our method. In particular, we show that our method can be used for solving a 2D elliptic interface equation with vertical interface lines. The simplicity of these lines allows use to directly use our 1D approach to solve the problem. 
	\end{itemize}
	
	The rest of this paper is organized in three sections. \cref{sec:main} contains the main result in the form of theoretically guaranteed convergence rates. \cref{sec:numerical} contains some numerical results, which support our theoretical results.
The main result stated in \cref{sec:main} is proved in \cref{sec:proof}.

	\section{Main Result Using Biorthogonal Multiwavelets on Bounded Intervals} \label{sec:main}
	We start this section by recalling some basic facts on wavelets, which serve as a basis for our discretization. For a detailed exposition, we refer readers to \cite[Sections 2.1-2.2]{HM24} or \cite[Section 1.1]{HM25}. Starting with a compactly supported biorthogonal wavelet $(\{\tilde{\phi};\tilde{\psi}\},\{\phi;\psi\})$ in $L^{2}(\R)$, we can construct a compactly supported biorthogonal wavelet in $L^{2}(\Omega)$, where $\Omega:=(0,1)$, satisfying homogeneous boundary conditions and desired vanishing moments by using the direct approach in \cite{HM21}. Throughout this paper, we solely rely on our construction procedure in \cite{HM21}, because it is simple to use, generates all possible locally supported boundary basis functions, and is applicable to all compactly supported biorthogonal multiwavelets in $L^{2}(\R)$. Recall that a multiwavelet means $\phi, \psi, \tilde{\phi}, \tilde{\psi}$ are vector-valued functions. The second property enables us to avoid using more boundary basis functions than needed; i.e., we maximally preserve the interior basis functions. For a given approximation order, a multiwavelet generally has shorter support than a (scalar) wavelet and requires fewer boundary basis functions when adapted to a bounded interval, which simplifies implementation. For simplicity, we mostly use the term wavelets to refer to scalar or vector-valued $\phi, \psi, \tilde{\phi}, \tilde{\psi}$ in this paper.
	
	More explicitly, our direct approach in \cite{HM21} yields a locally supported biorthogonal wavelet $(\tilde{\mathcal{B}}_{J_0},\mathcal{B}_{J_0})$ in $L^2(\Omega)$ from a compactly supported biorthogonal wavelet $(\{\tilde{\phi};\tilde{\psi}\},\{\phi;\psi\})$ in $L^{2}(\R)$, where
	\[
	 \mathcal{B}_{J_0}:=\Phi_{J_0}\cup \cup_{j=J_0}^\infty \Psi_j \subseteq L^2(\Omega),
	\qquad \tilde{\mathcal{B}}_{J_0}:=\tilde{\Phi}_{J_0}\cup \cup_{j=J_0}^\infty \tilde{\Psi}_j \subseteq L^2(\Omega),
	\]
	the integer $J_0\in \N$ is the coarsest scale level, and
	\[
	\begin{aligned}
		&\Phi_{J_0} := \{\phi^{L}_{J_0;0}\}\cup
		\{\phi_{J_0;k} \setsp n_{l,\phi} \le k\le 2^{J_0}-n_{h,\phi}\}\cup \{ \phi^{R}_{J_0;2^{J_0}-1}\},\\
		&\Psi_{j} := \{\psi^{L}_{j;0}\}\cup
		\{\psi_{j;k} \setsp n_{l,\psi} \le k\le 2^{j}-n_{h,\psi}\}\cup \{ \psi^{R}_{j;2^{j}-1}\},\quad j\ge J_0,
	\end{aligned}
	\]
	where $f_{j;k}:=2^{j/2}f(2^j\cdot-k)$, $n_{l,\phi}, n_{h,\phi}, n_{l,\psi}, n_{h,\psi}$ are known integers, $\phi^L, \phi^R$ are boundary refinable functions, and $\psi^L, \psi^R$ are boundary wavelets. The boundary basis functions $\phi^L, \phi^R,\psi^L, \psi^R$ are finite subsets of functions in $L^2(\Omega)$. We define $\tilde{\mathcal{B}}_{J_0}$ in the same fashion, except to each element in $\mathcal{B}_{J_0}$, we add $\sim$ for a natural bijection.
Define scaled systems
	\be \label{BH1J0}
 	 \mathcal{B}^{H^1_0(\Omega)}_{J_0} := [2^{-J_0}\Phi_{J_0}] \cup \cup_{j=J_0}^{\infty} [2^{-j}\Psi_{j}]
 \quad \mbox{and}\quad
 \tilde{\mathcal{B}}^{H^{-1}(\Omega)}_{J_0} := [2^{J_0}\tilde{\Phi}_{J_0}] \cup \cup_{j=J_0}^{\infty} [2^{j}\tilde{\Psi}_{j}].
	\ee
By \cite[Theorem 1.1]{HM25}, if $\phi$ belongs to $H^{1}(\R)$ and $\mathcal{B}_{J_0} \subseteq H^{1}_0(\Omega)$, then
 	 $\mathcal{B}^{H^1_0(\Omega)}_{J_0}$
	is a Riesz basis of the Sobolev space $H^1_0(\Omega)$. In fact, \cite[Theorem 1.1]{HM25} is established by proving that  $(\tilde{\mathcal{B}}^{H^{-1}(\Omega)}_{J_0}, \mathcal{B}^{H^{1}_0(\Omega)}_{J_0})$
forms a pair of dual Sobolev spaces $(H^{-1}(\Omega), H^{1}_0(\Omega))$ (see \cite{hanbook} for definition). Consequently,
using a pair of dual Sobolev spaces,
each function $u \in H^{1}_0(\Omega)$ admits the following wavelet representation
	\begin{equation}\label{expr}
	u = \sum_{\eta \in \mathcal{B}^{H^1_0(\Omega)}_{J_0}} c_\eta \eta,
    \end{equation}
	with $c_\eta:=\la u, \tilde{\eta}\ra$, $\tilde{\eta}\in \tilde{\mathcal{B}}^{H^{-1}(\Omega)}_{J_0}$, and the above series converging absolutely in $H^{1}_0(\Omega)$. Additionally, as stated in \cite[Theorem 1.1]{HM25} using a pair of dual Sobolev spaces $(H^{-1}(\Omega), H^{1}_0(\Omega))$, there are positive constants $C_{\mathcal{B},1},C_{\mathcal{B},2}$ such that the coefficients $\{c_\eta\}_{\eta \in \mathcal{B}^{H^1_0(\Omega)}_{J_0}}$ satisfy the following stability property
	\begin{equation} \label{Riesz:stab} %\label{Riesz:L2}
		C_{\mathcal{B},1} \sum_{\eta \in \mathcal{B}^{H^1_0(\Omega)}_{J_0}} |c_{\eta}|^2 \le \Big\| \sum_{\eta \in \mathcal{B}^{H^1_0(\Omega)}_{J_0}} c_{\eta} \eta \Big\|^2_{H^{1}(\Omega)} \le C_{\mathcal{B},2} \sum_{\eta \in \mathcal{B}^{H^1_0(\Omega)}_{J_0}} |c_{\eta}|^2,
	\end{equation}
which is well known as the norm equivalence in the literature (e.g., see \cite{cdd01,dku99} and references therein).

	In the standard wavelet Galerkin framework, we take a finite subset of \eqref{BH1J0} defined below
	\be \label{BH1J0J}
	 \mathcal{B}^{H^1_0(\Omega)}_{J_0,J} := [2^{-J_0}\Phi_{J_0}] \cup \cup_{j=J_0}^{J} [2^{-j} \Psi_{j}],
	\ee
	and use its linear combination as an approximate solution.
However, using this finite subset alone is not sufficient to resolve the singularity at the interface point $\Gamma$.
Note that $\text{span}(\mathcal{B}^{H^1_0(\Omega)}_{J_0,J}) = \text{span}(\Phi_{J+1})$; i.e., $\mathcal{B}^{H^1_0(\Omega)}_{J_0,J}$ spans the same space as the standard (unmodified) FEM.
Note
$\#\mathcal{B}^{H^1_0(\Omega)}_{J_0,J}
=\#\Phi_{J+1}$, where $\#$ stands for the cardinality of the given set.
As discussed in the introduction, this means that one would experience reduced convergence rates. To achieve optimal $H^1(\Omega)$ and $L^2(\Omega)$ convergence rates, we need to include wavelet basis functions at higher scale levels that touch the interface point $\Gamma$. Hence, given that $\vmo(\tilde{\psi}) = m$ (i.e., $\int_{\mathbb{R}}x^{j} \tilde{\psi}(x)dx =0$ for all $j=0,\ldots,m-1$), we enrich our finite subset as follows
	\be \label{BSH1J0J}
	 \mathcal{B}^{S,H^1_0(\Omega)}_{J_0,J}
	 :=\mathcal{B}^{H^1_0(\Omega)}_{J_0,J} \cup \cup_{j=J+1}^{(2m-2)J-1} [2^{-j}\mathcal{S}_j],
	\quad
	\mathcal{S}_{j} := \{\eta \in \Psi_{j} : \Gamma \cap \text{Supp}(\tilde{\eta}) \neq \emptyset  \; \text{and} \; \tilde{\eta} \in \tilde{\Psi}_{j}\},
	\ee
	where $\mathcal{B}^{H^1_0(\Omega)}_{J_0,J}$ is defined in \eqref{BH1J0J}, and $\text{Supp}(\tilde{\eta})$ is the smallest closed interval such that $\tilde{\eta}$ vanishes outside it. See \cref{Sj} for an illustration of the basis functions we add to $\mathcal{B}^{H^1_0(\Omega)}_{J_0,J}$ to form $\mathcal{B}^{S,H^1_0(\Omega)}_{J_0,J}$. The coefficients of the wavelet basis functions are obtained by solving the following linear system
	\begin{equation} \label{BuJv}
		B(u_J,v) := \la a u_J', v' \ra  = \la f, v\ra  - \la g_{\Gamma} \delta_{\Gamma}, v \ra , \quad \forall v \in \mathcal{B}^{S,H^{1}_0(\Omega)}_{J_0,J}.
	\end{equation}
	
	\begin{figure}
		\tikzset{every picture/.style={line width=0.75pt}} %set default line width to 0.75pt        
		\begin{tikzpicture}[x=0.75pt,y=0.75pt,yscale=-1,xscale=1]
			%uncomment if require: \path (0,300); %set diagram left start at 0, and has height of 300
			
			%Straight Lines [id:da09133226519803916] 
			\draw    (110.2,260.44) -- (510.2,261.44) ;
			%Straight Lines [id:da37288228997887185] 
			\draw    (347,254.64) -- (347,267.64) ;
			%Straight Lines [id:da8108248586017661] 
			\draw    (321.33,240.41) -- (328.58,251.72) ;
			%Straight Lines [id:da5700050120184311] 
			\draw    (328.58,251.72) -- (335.83,210.65) ;
			%Straight Lines [id:da879217878359737] 
			\draw    (343.08,252.31) -- (335.83,210.65) ;
			%Straight Lines [id:da9796000698475377] 
			\draw    (350.33,240.41) -- (343.08,252.31) ;
			
			%Straight Lines [id:da008070566499312593] 
			\draw    (332.33,240.41) -- (339.58,251.72) ;
			%Straight Lines [id:da06372706479601775] 
			\draw    (339.58,251.72) -- (346.83,210.65) ;
			%Straight Lines [id:da362633094740475] 
			\draw    (354.08,252.31) -- (346.83,210.65) ;
			%Straight Lines [id:da6488429274362556] 
			\draw    (361.33,240.41) -- (354.08,252.31) ;
			
			%Straight Lines [id:da8689714606229884] 
			\draw    (342.33,240.41) -- (349.58,251.72) ;
			%Straight Lines [id:da5897763825949154] 
			\draw    (349.58,251.72) -- (356.83,210.65) ;
			%Straight Lines [id:da6417688745872986] 
			\draw    (364.08,252.31) -- (356.83,210.65) ;
			%Straight Lines [id:da9185627515782969] 
			\draw    (371.33,240.41) -- (364.08,252.31) ;
			
			%Straight Lines [id:da08271889357806428] 
			\draw    (331.33,195.41) -- (335.54,206.72) ;
			%Straight Lines [id:da3795435558329564] 
			\draw    (335.54,206.72) -- (339.74,165.65) ;
			%Straight Lines [id:da6682655820740195] 
			\draw    (343.95,207.31) -- (339.74,165.65) ;
			%Straight Lines [id:da6475345620282154] 
			\draw    (348.15,195.41) -- (343.95,207.31) ;
			
			%Straight Lines [id:da7585028178324489] 
			\draw    (337.71,195.41) -- (341.92,206.72) ;
			%Straight Lines [id:da18858227018798546] 
			\draw    (341.92,206.72) -- (346.12,165.65) ;
			%Straight Lines [id:da2513405515163567] 
			\draw    (350.33,207.31) -- (346.12,165.65) ;
			%Straight Lines [id:da8226431915518251] 
			\draw    (354.53,195.41) -- (350.33,207.31) ;
			
			%Straight Lines [id:da10304239703471008] 
			\draw    (343.51,195.41) -- (347.72,206.72) ;
			%Straight Lines [id:da929394911907056] 
			\draw    (347.72,206.72) -- (351.92,165.65) ;
			%Straight Lines [id:da5976385084710379] 
			\draw    (356.13,207.31) -- (351.92,165.65) ;
			%Straight Lines [id:da7624001718580387] 
			\draw    (360.33,195.41) -- (356.13,207.31) ;

			%Straight Lines [id:da32070168192284854] 
			\draw    (337.33,152.41) -- (339.8,163.72) ;
			%Straight Lines [id:da45779721339936086] 
			\draw    (339.8,163.72) -- (342.26,122.65) ;
			%Straight Lines [id:da3170719341608639] 
			\draw    (344.73,164.31) -- (342.26,122.65) ;
			%Straight Lines [id:da24257443768340825] 
			\draw    (347.19,152.41) -- (344.73,164.31) ;
			
			%Straight Lines [id:da10435343135213804] 
			\draw    (341.07,152.41) -- (343.54,163.72) ;
			%Straight Lines [id:da42898433892980614] 
			\draw    (343.54,163.72) -- (346,122.65) ;
			%Straight Lines [id:da31076167099727225] 
			\draw    (348.47,164.31) -- (346,122.65) ;
			%Straight Lines [id:da5499015821558104] 
			\draw    (350.93,152.41) -- (348.47,164.31) ;
			
			%Straight Lines [id:da7769480457548112] 
			\draw    (344.47,152.41) -- (346.94,163.72) ;
			%Straight Lines [id:da7192791829281896] 
			\draw    (346.94,163.72) -- (349.4,122.65) ;
			%Straight Lines [id:da5347274565657548] 
			\draw    (351.87,164.31) -- (349.4,122.65) ;
			%Straight Lines [id:da7063062076311363] 
			\draw    (354.33,152.41) -- (351.87,164.31) ;

			%Straight Lines [id:da3975887813728973] 
			\draw    (342.33,66.41) -- (343.11,77.72) ;
			%Straight Lines [id:da034221561904259845] 
			\draw    (343.11,77.72) -- (343.88,36.65) ;
			%Straight Lines [id:da1947286786114495] 
			\draw    (344.65,78.31) -- (343.88,36.65) ;
			%Straight Lines [id:da4364842906459272] 
			\draw    (345.43,66.41) -- (344.65,78.31) ;
			
			%Straight Lines [id:da7549715421343396] 
			\draw    (343.51,66.41) -- (344.28,77.72) ;
			%Straight Lines [id:da6370578351355539] 
			\draw    (344.28,77.72) -- (345.05,36.65) ;
			%Straight Lines [id:da31006930677389166] 
			\draw    (345.83,78.31) -- (345.05,36.65) ;
			%Straight Lines [id:da09182472091606453] 
			\draw    (346.6,66.41) -- (345.83,78.31) ;
			
			%Straight Lines [id:da40530414353147826] 
			\draw    (344.57,66.41) -- (345.35,77.72) ;
			%Straight Lines [id:da15423341577139038] 
			\draw    (345.35,77.72) -- (346.12,36.65) ;
			%Straight Lines [id:da09763430840057508] 
			\draw    (346.89,78.31) -- (346.12,36.65) ;
			%Straight Lines [id:da4576842199722658] 
			\draw    (347.67,66.41) -- (346.89,78.31) ;

			%Straight Lines [id:da7084672280740796] 
			\draw    (110,253.64) -- (110,266.64) ;
			%Straight Lines [id:da9901211633251319] 
			\draw    (510,253.64) -- (510,266.64) ;
			
			% Text Node
			\draw (341,87) node [anchor=north west][inner sep=0.75pt]    {$\vdots$};
			% Text Node
			\draw (390,53) node [anchor=north west][inner sep=0.75pt]  [font=\footnotesize]  {$(2m-2) J-1$};
			% Text Node
			\draw (390,133) node [anchor=north west][inner sep=0.75pt]  [font=\footnotesize]  {$J+3$};
			% Text Node
			\draw (390,182) node [anchor=north west][inner sep=0.75pt]  [font=\footnotesize]  {$J+2$};
			% Text Node
			\draw (390,222) node [anchor=north west][inner sep=0.75pt]  [font=\footnotesize]  {$J+1$};
			% Text Node
			\draw (341,270) node [anchor=north west][inner sep=0.75pt]    {$\Gamma $};
			% Text Node
			\draw (105,270) node [anchor=north west][inner sep=0.75pt]    {$0$};
			% Text Node
			\draw (504,270) node [anchor=north west][inner sep=0.75pt]    {$1$};
		\end{tikzpicture}
		\caption{An illustration of $\cup_{j=J+1}^{(2m-2)J-1} [2^{-j}\mathcal{S}_j]$ in \eqref{BSH1J0J}.}
		\label{Sj}
	\end{figure}
	
	Below we present the main result of this paper on the optimal convergence rates of our wavelet Galerkin method and the uniform boundedness of the condition number of the coefficient matrix. The detailed and self-contained proof of the following main result is presented in \cref{sec:proof}.

	\begin{theorem} \label{thm:main}
		Let $m\ge 2$ be an integer and $\Gamma$ be a finite subset of the domain $\Omega:=(0,1)$. Assume that
$u \in H^{1}_0(\Omega)$ is the true solution of the model problem \eqref{model} with variable functions $a,f$ such that
%$u_+ \in H^{m}(\Omega_+)$ and $u_- \in H^{m}(\Omega_-)$.
$u\in H^m(\Omega\setminus \Gamma)$.
Define $V^{wav}_{h} := \text{span}(\mathcal{B}^{S,H^{1}_0(\Omega)}_{J_0,J})$, where $\mathcal{B}^{S,H^{1}_0(\Omega)}_{J_0,J}$ is defined in \eqref{BSH1J0J} and constructed from a compactly supported biorthogonal wavelet $(\{\tilde{\phi};\tilde{\psi}\},\{\phi;\psi\})$ in $L^{2}(\R)$ such that $\phi,\psi \in H^1(\R)$ and  $\tilde{\psi}$ has $m$ vanishing moments:
\begin{equation}\label{vm:psi}
\vmo(\tilde{\psi})\ge m,\qquad \mbox{i.e.},\quad \int_\R x^j \tilde{\psi}(x)dx=0,\qquad j=0,\ldots, m-1,
\end{equation}
and the basic compactly supported boundary dual wavelets $\tilde{\psi}^L$ on $[0,\infty)$ and $\tilde{\psi}^R$ on $(-\infty,1]$ satisfy
\begin{equation}\label{vm:LR}
\int_0^\infty x^j \tilde{\psi}^L(x) dx=\int_{-\infty}^1 (x-1)^j \tilde{\psi}^R(x) dx=0,\qquad j=1,\ldots,m-1.
\end{equation}
For $J \ge J_0$,
let $h:=2^{-J}$ and $N_J:=\#\mathcal{B}^{S,H^{1}_0(\Omega)}_{J_0,J}$ for its cardinality.
Note that $N_J^{-1}=\bo(h)$.
Define $u_h = u_J := \sum_{\eta \in \mathcal{B}^{S,H^{1}_0(\Omega)}_{J_0,J}} c_\eta \eta \in V^{wav}_h$ as the approximate solution from solving \eqref{BuJv}. Then for all $J \ge J_0$, there is a positive constant $C$, independent of $J$, $h$, $N_J$, such that
		\[
		\|u - u_h \|_{H^{1}(\Omega)} \le C h^{m-1}, \qquad \|u - u_h \|_{L^{2}(\Omega)} \le C h^{m},
		\]
		and
		\[
\|u - u_J \|_{H^{1}(\Omega)} \le C N_J^{1-m},
 \qquad \|u - u_J \|_{L^{2}(\Omega)} \le C N_J^{-m},
		\]
		where $C$ is a generic constant bounded above by $c \|u\|_{H^{m}(\Omega\setminus \Gamma)}$
%$c (\|u\|_{H^{m}(\Omega_-)} + \|u\|_{H^{m}(\Omega_+)})$
with the constant $c$ depending only on $\Gamma$, the function $a$, and the wavelet basis. Moreover, the condition number of the coefficient matrix, $\kappa$, satisfies
		\[ \kappa\Big([\blf(\alpha,\beta)]_{\alpha,\beta\in \mathcal{B}^{S,H^1_0(\Omega)}_{J_0,J}}\Big) \le C_w \| a\|_{L^\infty(\Omega)} \|a^{-1}\|_{L^\infty(\Omega)}, \quad \text{for all } J \ge J_0,
		\]
		where $B(\cdot,\cdot)$ is defined in \eqref{BuJv}, and $C_w$ is a constant that only depends on the wavelet basis.
	\end{theorem}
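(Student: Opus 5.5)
We follow the standard FEM route: C\'ea's lemma reduces the $H^1$ estimate to a best-approximation problem, the Riesz stability \eqref{Riesz:stab} reduces that to bounding discarded wavelet coefficients, and an Aubin--Nitsche duality argument gives the $L^2$ rate. Since $a$ is bounded above and below, $B(\cdot,\cdot)$ is coercive and continuous on $H^1_0(\Omega)$. Galerkin orthogonality for \eqref{BuJv} then yields
\[
\|u-u_h\|_{H^1(\Omega)}\le \frac{\|a\|_{L^\infty}}{\operatorname{ess\,inf} a}\, C\, \inf_{v\in V^{wav}_h}\|u-v\|_{H^1(\Omega)}.
\]
We take $v$ to be the truncation of the expansion \eqref{expr} to $\mathcal{B}^{S,H^1_0(\Omega)}_{J_0,J}$. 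By \eqref{Riesz:stab},
\[
\|u-v\|_{H^1(\Omega)}^2\le C_{\mathcal{B},2}\sum_{\eta\notin \mathcal{B}^{S,H^1_0}_{J_0,J}}|c_\eta|^2,
\qquad c_\eta=\langle u,2^{j}\tilde\psi_{j;k}\rangle \text{ for } \eta=2^{-j}\psi_{j;k}.
\]
The discarded coefficients fall into two groups.

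\textbf{(a) Levels $j>J$ with $\operatorname{Supp}(\tilde\eta)\cap\Gamma=\emptyset$.} On $\operatorname{Supp}(\tilde\eta)$ the function $u$ is $H^m$. Subtract its Taylor polynomial of degree $m-1$ and use the vanishing moments \eqref{vm:psi}; at the boundary use \eqref{vm:LR}, which suffices because $u(0)=u(1)=0$ kills the constant term. A Bramble--Hilbert bound on an interval of length $\sim 2^{-j}$ gives $|\langle u,\tilde\psi_{j;k}\rangle|\le C2^{-jm}|u|_{H^m(I_{j,k})}$. Hence $|c_\eta|^2\le C2^{-2j(m-1)}|u|^2_{H^m(I_{j,k})}$. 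The intervals $I_{j,k}$ overlap boundedly, so summing over $k$ and then over $j>J$ gives $C2^{-2J(m-1)}\|u\|^2_{H^m(\Omega\setminus\Gamma)}$.

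\textbf{(b) Levels $j\ge(2m-2)J$ with $\operatorname{Supp}(\tilde\eta)\cap\Gamma\ne\emptyset$.} Only a bounded number of $k$ per level (times $\#\Gamma$) occur. Using only $u\in W^{1,\infty}$, which holds since $u$ is piecewise $H^m$ with $m\ge2$, together with one vanishing moment, we get $|\langle u,\tilde\psi_{j;k}\rangle|\le C\|u'\|_{L^\infty}2^{-3j/2}$. So $|c_\eta|^2\le C2^{-j}$, and the sum over $j\ge(2m-2)J$ is $C2^{-(2m-2)J}=Ch^{2(m-1)}$. This is exactly what fixes the top level $(2m-2)J-1$ in \eqref{BSH1J0J}. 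Combining (a) and (b) gives the $H^1$ rate $h^{m-1}$.

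This coefficient-decay step is the main obstacle. The delicate points are handling the boundary dual wavelets uniformly and keeping the bounded-overlap bookkeeping independent of how close the points of $\Gamma$ are (the constant may depend on $\Gamma$). Points of $\Gamma$ near the boundary would be treated the same way.

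For the $L^2$ rate we use duality. Let $w\in H^1_0$ solve $B(v,w)=\langle u-u_h,v\rangle$. Then $w$ is piecewise $H^2$ with $\|w\|_{H^2(\Omega\setminus\Gamma)}\le C\|u-u_h\|_{L^2}$; its kinks at $\Gamma$ come only from jumps of $a$. Galerkin orthogonality gives $\|u-u_h\|^2_{L^2}\le C\|u-u_h\|_{H^1}\inf_{v\in V_h^{wav}}\|w-v\|_{H^1}$. Repeating (a) with $m=2$ handles the non-interface terms at levels $j>J$ and gives $Ch\|w\|_{H^2(\Omega\setminus\Gamma)}$. Part (b) for $w$ gives $2^{-(2m-2)J/2}\le h$ because $m\ge2$. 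Together these yield $\|u-u_h\|_{L^2}\le Ch^m$.

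The $N_J$ statements follow because $N_J\le C2^J+C(2m-2)J\,\#\Gamma\le C'2^J$, so $h\le CN_J^{-1}$.

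For the condition number, write $A=[B(\alpha,\beta)]$. For coefficients $c$,
\[
c^{T}Ac=B\Big(\sum c_\alpha\alpha,\sum c_\alpha\alpha\Big)\in\big[\operatorname{ess\,inf}a\,|\cdot|_{H^1}^2,\ \|a\|_{L^\infty}|\cdot|^2_{H^1}\big].
\]
By Poincar\'e the $H^1$ seminorm is equivalent to the $H^1$ norm on $H^1_0$, and \eqref{Riesz:stab} applies to any subset of the Riesz basis. Hence $\lambda_{\max}/\lambda_{\min}\le C_w\|a\|_{L^\infty}\|a^{-1}\|_{L^\infty}$.
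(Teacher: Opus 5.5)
Your proposal is correct and follows essentially the same route as the paper. C\'ea's lemma and the Riesz stability \eqref{Riesz:stab} reduce everything to the discarded coefficients. These are bounded by $C2^{(1-m)j}\|u^{(m)}\|_{L^2(\mathrm{Supp}(\tilde\eta))}$ away from $\Gamma$. The paper gets this by $m$-fold integration by parts against the iterated antiderivatives $\tilde\psi^{[n]}$, which is equivalent to your Taylor/vanishing-moment argument, and $u(0)=u(1)=0$ together with \eqref{vm:LR} handles the boundary dual wavelets exactly as you describe. For the coefficients touching $\Gamma$ the bound is $C2^{-j/2}\|u'\|_{L^\infty}$. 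The $L^2$ rate then follows from Aubin--Nitsche, using the $m=2$ estimate for the dual solution.

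The only differences are cosmetic. You treat a finite $\Gamma$ directly instead of reducing to a singleton. You also write out the Rayleigh-quotient bound for the condition number, where the paper defers to the authors' earlier multidimensional work.
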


\iffalse
Because we are using
compactly supported biorthogonal wavelets $(\{\tilde{\phi};\tilde{\psi}\},\{\phi;\psi\})$ in $L^{2}(\R)$, the condition in \eqref{vm:psi} of $m$ vanishing moments for the dual wavelet $\tilde{\psi}$ is equivalent to the polynomial reproduction property
$\{1,x,\ldots,x^{m-1}\}\subset \mbox{span}\{\phi(\cdot-k) \setsp k\in \Z\}$.
The conditions in \eqref{vm:LR} for the compactly supported boundary dual wavelets $\tilde{\psi}^L$ and $\tilde{\psi}^R$ are equivalent to
\[
x^j|_{[0,\infty)} \in \mbox{span}\{\phi^L, \phi(\cdot-k) \setsp k \ge n_{l,\phi}\}
\quad \mbox{and}\quad
(x-1)^j|_{(-\infty,-1]} \in \mbox{span}\{\phi^R, \phi(\cdot-k) \setsp k \le n_{h,\phi}\}
\]
for all $j=1,\ldots, m-1$.
However, due to the homogeneous boundary conditions of the locally supported biorthogonal wavelet $(\tilde{\mathcal{B}}_{J_0},\mathcal{B}_{J_0})$ in $L^2(\Omega)$,
we must conclude that
%
\[
\int_0^\infty \tilde{\psi}^L(x) dx\ne 0 \quad \text{and} \quad
\int_{-\infty}^1 \tilde{\psi}^R(x) dx\ne 0.
\]
%
See \cite{hanbook,HM21} for details.
By the same proof in \cref{sec:proof}, \cref{thm:main} also holds if $\Gamma\subset \Omega$ is a finite set.
\fi

\section{Numerical Experiments} \label{sec:numerical}
	For this paper, we use three kinds of locally supported biorthogonal wavelets on the unit interval $[0,1]$ with increasing approximation orders $m$, as presented in \cite[Examples 3.1, 3.4, and 3.5]{HM25} satisfying the conditions \eqref{vm:psi} and \eqref{vm:LR} with $m=2,3,4$ respectively, and the later two being multiwavelets (since $\phi,\psi,\tilde{\phi},\tilde{\psi}$ are $2 \times 1$ vector-valued functions). In the following tables, we define $\text{E}_{L^2}:= \|u_{J}-u\|_{L^2(\Omega)}$ and  $\text{E}_{H^1}:= \|u'_{J}-u'\|_{L^2(\Omega)}$, where $u_J$ is the approximate solution obtained from \eqref{BuJv}. Otherwise, we may replace $u$ with a reference solution $u_{\text{ref}}$ computed using a fine grid. The errors measured in the $L^2(\Omega)$-norm are approximated by the Riemann sum on a fine grid. Because $N_J:=\#\mathcal{B}^{S,H^1_0(\Omega)}_{J_0,J}$ and $N_J^{-1}=\bo(h)$ with $h=2^{-J}$,
in accordance with \cref{thm:main},
we record a convergence rate that depends on $h$ and another on $N_J$:
	\begin{equation} \label{order}
	\text{Ord}_{L^2,h} := \log_2\left( \frac{\|u_{J-1}-u\|_{L^2(\Omega)}}{\|u_{J}-u\|_{L^2(\Omega)}} \right),
	\quad
	\text{Ord}_{L^2,N} := d \frac{\log_2\left( \|u_{J-1}-u\|_{L^2(\Omega)} / \|u_{J}-u\|_{L^2(\Omega)} \right)}{\log_2\left( N_J / N_{J-1} \right)},
	\end{equation}
	where $d=1$ if the problem is 1D and $d=2$ if the problem is 2D. The convergence rates in the $H^1(\Omega)$-semi-norm, denoted by $\text{Ord}_{H^1,h}$ and $\text{Ord}_{H^1,N}$, are similarly computed
	with the function replaced by its derivative. The condition number of the coefficient matrix from our wavelet Galerkin method is reported in the column labeled $\kappa$. If $\Gamma$ contains only a single point (i.e., there is only one interface point), define $\Omega_- := (0,\Gamma)$ and $\Omega_+ := (\Gamma,1)$. In the following tables, we compare the numerical performance of using \eqref{BH1J0J} (the unenriched wavelet basis) against \eqref{BSH1J0J} (the enriched wavelet basis).  
	
	\begin{example} \label{exnogamma}
		\normalfont
		Consider the model problem \eqref{model} with $a_- = 1$, $a_+ = 10^5$, $\Gamma:=\pi/6$, and $f$ is chosen such that the true solution $u$ satisfies $u_- = x e^x$, and
		\be \label{ex:u}
		\begin{aligned}
		u_+ & = \left(\tfrac{\left(-3 \Gamma^{4}+20 \Gamma^{3} a_+ +\left(-5 a_+ +6\right) \Gamma^{2}-16 a_+ \Gamma +9 a_+ -3\right) {\mathrm e}^{\Gamma}}{\Gamma  \left(\Gamma -1\right)^{3} a_+ \left(\Gamma -3\right)}\right)x^2 \\
		& \; + \left(\tfrac{\left(3 \Gamma^{5}+\left(-20 a_+ +7\right) \Gamma^{4}+\left(-40 a_+ -10\right) \Gamma^{3}+\left(50 a_+ -10\right) \Gamma^{2}+\left(-8 a_+ +7\right) \Gamma -6 a_+ +3\right) {\mathrm e}^{\Gamma}}{\left(\Gamma -1\right)^{3} a_+ \left(\Gamma -3\right) \Gamma^{2}}\right)x^3 \\
		& \; + \left(\tfrac{\left(-7 \Gamma^{4}+45 \Gamma^{3} a_+ +\left(-10 a_+ +14\right) \Gamma^{2}-25 a_+ \Gamma +14 a_+ -7\right) {\mathrm e}^{\Gamma}}{\left(\Gamma -1\right)^{3} a_+ \left(\Gamma -3\right) \Gamma^{2}} \right)x^4 + \left(\tfrac{\left(4 \Gamma^{3}+\left(-24 a_+ -4\right) \Gamma^{2}+\left(24 a_+ -4\right) \Gamma -8 a_+ +4\right) {\mathrm e}^{\Gamma}}{\left(\Gamma -1\right)^{3} a_+ \left(\Gamma -3\right) \Gamma^{2}}\right)x^5.
		\end{aligned}
		\ee
		For this problem, $g_\Gamma =0$. See \cref{ex:nogamma} for the numerical experiments.
		\begin{table}[htbp]
			\begin{center}
				 \resizebox{\textwidth}{!}{%
					 \begin{tabular}{c | c c c c c c c c | c c c c c c c c | c c c c c c c c}
						\hline
						& \multicolumn{8}{c}{FEM version $\mathcal{B}^{H^1_0(\Omega)}_{2,J}$ using \cite[Example 3.1]{HM25} with approximation order $2$} \vline & \multicolumn{8}{c}{FEM version $\mathcal{B}^{H^1_0(\Omega)}_{2,J}$ using \cite[Example 3.4]{HM25} with approximation order $3$} \vline &
						 \multicolumn{8}{c}{FEM version $\mathcal{B}^{H^1_0(\Omega)}_{3,J}$ using \cite[Example 3.5]{HM25} with approximation order $4$}  \\
						\hline
						$J$ & $N_{J}$ & $\kappa$ & $\text{E}_{L^2}$ & \multicolumn{2}{c}{$\text{Ord}_{L^2,h}=\text{Ord}_{L^2,N}$} & $\text{E}_{H^1}$ & \multicolumn{2}{c}{$\text{Ord}_{H^1,h}=\text{Ord}_{H^1,N}$} \vline & $N_{J}$ & $\kappa$ & $\text{E}_{L^2}$ & \multicolumn{2}{c}{$\text{Ord}_{L^2,h}=\text{Ord}_{L^2,N}$} & $\text{E}_{H^1}$ & \multicolumn{2}{c}{$\text{Ord}_{H^1,h}=\text{Ord}_{H^1,N}$} \vline & $N_{J}$ & $\kappa$ & $\text{E}_{L^2}$ & \multicolumn{2}{c}{$\text{Ord}_{L^2,h}=\text{Ord}_{L^2,N}$} & $\text{E}_{H^1}$ & \multicolumn{2}{c}{$\text{Ord}_{H^1,h}=\text{Ord}_{H^1,N}$} \\
						\hline
						2 & 7 & 1.68E+4 & 4.67E-2 &  \multicolumn{2}{c}{}  & 1.05  & \multicolumn{2}{c}{} \vline & 15 & 1.64E+5 & 2.55E-2 & \multicolumn{2}{c}{} & 4.25E-1 & \multicolumn{2}{c}{} \vline & & & & &  & & & \\
						3 & 15 & 2.39E+4 & 2.74E-2  & \multicolumn{2}{c}{0.70} & 6.41E-1 & \multicolumn{2}{c}{0.65} \vline& 31  & 2.06E+5 & 2.50E-2  & \multicolumn{2}{c}{0.03} & 3.99E-1 & \multicolumn{2}{c}{0.09} \vline & 32 & 1.28E+5 & 3.25E-2 & & & 4.52E-1 & & \\
						4 & 31 & 2.86E+4 & 2.52E-2 & \multicolumn{2}{c}{0.12}  & 4.71E-1 & \multicolumn{2}{c}{0.42} \vline& 63 & 2.35E+5 & 2.48E-2  & \multicolumn{2}{c}{0.01} & 3.95E-1 & \multicolumn{2}{c}{0.02} \vline & 64 & 2.05E+5 & 2.52E-2 & \multicolumn{2}{c}{0.37} & 3.76E-1 & \multicolumn{2}{c}{0.27} \\
						5 & 63 & 1.95E+5 & 8.59E-3 & \multicolumn{2}{c}{1.52} & 2.63E-1 & \multicolumn{2}{c}{0.82} \vline& 127 & 2.59E+5 & 8.50E-3  & \multicolumn{2}{c}{1.52} & 2.30E-1 & \multicolumn{2}{c}{0.77} \vline  & 128 & 3.70E+5 & 1.04E-2& \multicolumn{2}{c}{1.27} & 2.54E-1 & \multicolumn{2}{c}{0.56}\\
						6 & 127 &3.49E+5 & 2.42E-4 & \multicolumn{2}{c}{5.09} & 7.15E-2  & \multicolumn{2}{c}{1.86}\vline & 255 & 4.86E+5  & 1.73E-4  & \multicolumn{2}{c}{5.59} & 3.27E-2 & \multicolumn{2}{c}{2.80} \vline & 256 & 4.27E+5 & 1.17E-3 & \multicolumn{2}{c}{3.16} &8.47E-2  & \multicolumn{2}{c}{1.58} \\
						7 & 255 & 4.22E+5 & 1.80E-4 & \multicolumn{2}{c}{0.42} & 4.56E-2 & \multicolumn{2}{c}{0.65}\vline & 511  & 5.49E+5 & 1.73E-4 & \multicolumn{2}{c}{-4.20E-4} & 3.27E-2 & \multicolumn{2}{c}{-3.64E-4} \vline & 512 & 5.06E+5 & 6.69E-4 & \multicolumn{2}{c}{0.80} & 6.41E-2 & \multicolumn{2}{c}{0.40} \\
						8 & 511 & 4.64E+5 & 1.74E-4 & \multicolumn{2}{c}{0.05} & 3.63E-2 & \multicolumn{2}{c}{0.32}\vline & 1023 & 5.61E+5 & 1.73E-4  & \multicolumn{2}{c}{-1.04E-4} & 3.27E-2 & \multicolumn{2}{c}{-1.01E-4} \vline & 1024 & 5.84E+5 & 4.21E-4 & \multicolumn{2}{c}{0.67} & 5.09E-2 & \multicolumn{2}{c}{0.33} \\
						9 & 1023 & 4.89E+5 & 1.73E-4 & \multicolumn{2}{c}{0.01} & 3.36E-2 &\multicolumn{2}{c}{0.11}\vline & 2047 & 5.68E+5 & 1.73E-4  & \multicolumn{2}{c}{-1.88E-5} & 3.27E-2 & \multicolumn{2}{c}{-1.82E-5} \vline & 2048 & 6.42E+5 & 2.97E-4 & \multicolumn{2}{c}{0.50} & 4.28E-2 & \multicolumn{2}{c}{0.25} \\
						10 & 2047 & 5.04E+5 & 1.73E-4 & \multicolumn{2}{c}{0.001} & 3.29E-2 & \multicolumn{2}{c}{0.03}\vline & 4095 & 5.78E+5 & 1.73E-4  & \multicolumn{2}{c}{3.48E-5} & 3.27E-2 & \multicolumn{2}{c}{3.23E-5} \vline & 4096  & 6.74E+5 & 2.35E-5 & \multicolumn{2}{c}{0.34} & 3.80E-2 & \multicolumn{2}{c}{0.17} \\
						\hline
						& \multicolumn{8}{c}{$\mathcal{B}^{S,H^1_0(\Omega)}_{2,J}$ using \cite[Example 3.1]{HM25} with approximation order $2$} \vline & \multicolumn{8}{c}{$\mathcal{B}^{S,H^1_0(\Omega)}_{2,J}$ using \cite[Example 3.4]{HM25} with approximation order $3$} \vline &
						 \multicolumn{8}{c}{$\mathcal{B}^{S,H^1_0(\Omega)}_{3,J}$ using \cite[Example 3.5]{HM25} with approximation order $4$}  \\
						\hline
						$J$ & $N_{J}$ & $\kappa$ & $\|u_{J}-u\|_2$ & $\text{Ord}_{L^2,h}$ & $\text{Ord}_{L^2,N}$ & $\| u'_J -  u'\|_2 $ & $\text{Ord}_{H^1,h}$ & $\text{Ord}_{H^1,N}$ & $N_{J}$ & $\kappa$ & $\|u_{J}-u\|_2$ & $\text{Ord}_{L^2,h}$ & $\text{Ord}_{L^2,N}$ & $\|u'_J - u'\|_2 $ & $\text{Ord}_{H^1,h}$ & $\text{Ord}_{H^1,N}$ & $N_{J}$ & $\kappa$ & $\|u_{J}-u\|_2$ & $\text{Ord}_{L^2,h}$ & $\text{Ord}_{L^2,N}$ & $\|u_J' - u'\|_2 $ & $\text{Ord}_{H^1,h}$ & $\text{Ord}_{H^1,N}$ \\
						\hline
						2 & 10 & 2.08E+4 &4.64E-2 &  & & 1.05 & & & 55 & 5.46E+5  & 2.41E-3 &  & & 1.31E-1 & & & & & & & & & & \\
						3 & 21  & 1.94E+5 &1.34E-2 & 1.79 & 1.68 & 5.53E-1 & 0.93 & 0.86 & 95 &  5.74E+5 & 3.63E-4 & 2.73 & 3.47 & 4.65E-2 & 1.49 & 1.89 & 256 & 8.78E+5 & 1.33E-5 & & & 3.47E-3 & & \\
						4 & 40 & 4.15E+5 & 2.52E-3 & 2.41 & 2.59  & 2.56E-1 & 1.11 & 1.20 & 151 & 5.99E+5  & 4.11E-5 & 3.14 & 4.70 & 1.12E-2 & 2.05 & 3.06 & 368 & 8.78E+5 & 1.01E-6 & 3.71 & 7.08 & 5.61E-4 & 2.63 & 5.02 \\
						5 & 75 & 4.80E+5 & 6.62E-4 & 1.93 & 2.13  & 1.31E-1 & 0.96 & 1.06 & 239 & 7.23E+5  &  5.01E-6 & 3.04 & 4.58 & 2.24E-3 & 2.32 & 3.51 & 512 & 1.02E+6 & 6.64E-8 & 3.93 & 8.25 & 6.04E-5 & 3.22 & 6.75 \\
						6 & 142 & 5.07E+5 & 2.42E-4 & 1.45 & 1.58 & 7.15E-2 & 0.87 & 0.95 & 391 & 7.38E+5  & 6.29E-7 & 3.00 & 4.22 & 7.53E-4 & 1.57 & 2.21 & 720 & 1.06E+6 & 4.23E-9 & 3.97 & 8.07 & 9.07E-6 & 2.74 & 5.56 \\
						7 & 273 & 5.26E+5 & 5.95E-5 & 2.02 & 2.14 & 3.57E-2 & 1.00 & 1.06  & & &  &  &  &  & & &  &  &  & & & & & \\
						8 & 532 & 5.66E+5 & 1.40E-5 & 2.09 & 2.17 & 1.76E-2 & 1.02 &  1.06 &  &   &  &  & & & & & & & & & & & & \\
						9 & 1047 & 6.22E+5 & 2.78E-6 & 2.33 & 2.39 & 8.38E-3& 1.07 & 1.10 &  &   &  &  & & & & & & & & & & & & \\
						10 & 2074 & 6.53E+5 & 6.32E-7 & 2.14 & 2.17 & 4.06E-3 & 1.04 & 1.06  &  &   &  &  & & & & & & & & \\
						\hline
					\end{tabular}%
				}
			\end{center}
			\caption{Numerical results for \cref{exnogamma}.} %The slopes are computed by performing a linear regression on $\log_2(\|u_J-u\|_2)$ and $\log_2(\|u'_J-u'\|_2)$ versus $\log_2(N_J)$.}
			\label{ex:nogamma}
		\end{table}
	\end{example}
	
	\begin{example}
		\label{ex:threebp}
		\normalfont
		Consider the model problem \eqref{model} with $a(x) = 1 \chi_{[0,2/5)} + 10^3 \chi_{[2/5,\pi/6)} + 10^2 \chi_{[\pi/6,3/5)} + 10^5 \chi_{[3/5,1]}$, $\Gamma:=\{2/5,\pi/6,3/5\}$, and $f$ is chosen such that the true solution satisfies $u(x)= x e^x$ for $x \in [0,2/5)$, $u(x)$ takes the form of \eqref{ex:u} for $x \in [3/5,1]$, and
		\begin{align*}
			u(x) & = \tfrac{9 x^{4} \left(5 \pi^{2} {\mathrm e}^{\frac{\pi}{6}}+12500 \pi^{2} {\mathrm e}^{\frac{2}{5}}-89982 \pi  \,{\mathrm e}^{\frac{\pi}{6}}+143928 \,{\mathrm e}^{\frac{\pi}{6}}\right)}{50 \left(5 \pi -12\right)^{2} \pi^{2}}-\tfrac{3 x^{3} \left(25 \pi^{3} {\mathrm e}^{\frac{\pi}{6}}+125000 \pi^{3} {\mathrm e}^{\frac{2}{5}}-599850 \pi^{2} {\mathrm e}^{\frac{\pi}{6}}-144 \pi  \,{\mathrm e}^{\frac{\pi}{6}}+1727136 \,{\mathrm e}^{\frac{\pi}{6}}\right)}{500 \left(5 \pi -12\right)^{2} \pi^{2}}\\
			& +\tfrac{x^{2} \left(15625 \pi^{3} {\mathrm e}^{\frac{2}{5}}+15 \pi^{2} {\mathrm e}^{\frac{\pi}{6}}-359946 \pi  \,{\mathrm e}^{\frac{\pi}{6}}+647784 \,{\mathrm e}^{\frac{\pi}{6}}\right)}{250 \pi  \left(5 \pi -12\right)^{2}}, \quad x \in [2/5,\pi/6),\\
			u(x) & = \tfrac{9 x^{4} {\mathrm e}^{\frac{\pi}{6}} \left(6250 \pi^{5}-11437545 \pi^{4}+572850216 \pi^{3}-5499898056 \pi^{2}+20322892224 \pi -26681411664\right)}{31250 \pi^{2} \left(5 \pi^{5}-198 \pi^{4}+2808 \pi^{3}-18576 \pi^{2}+58320 \pi -69984\right)}\\
			& -\tfrac{3 x^{3} \left(3125 \pi^{6}-7582545 \pi^{5}+423337716 \pi^{4}-3392818056 \pi^{3}+6424102224 \pi^{2}+16883628336 \pi -52225560000\right) {\mathrm e}^{\frac{\pi}{6}}}{31250 \left(5 \pi^{5}-198 \pi^{4}+2808 \pi^{3}-18576 \pi^{2}+58320 \pi -69984\right) \pi^{2}}\\
			& +\tfrac{9 x^{2} {\mathrm e}^{\frac{\pi}{6}} \left(2495 \pi^{5}+13925024 \pi^{4}+91140216 \pi^{3}-2378520864 \pi^{2}+11576518704 \pi -17437680000\right)}{125000 \pi  \left(5 \pi^{5}-198 \pi^{4}+2808 \pi^{3}-18576 \pi^{2}+58320 \pi -69984\right)}, \quad x \in [\pi/6,3/5).
		\end{align*}
		For this problem, $g_{2/5}, g_{3/5} \neq 0$, and $g_{\pi/6} = 0$. See \cref{tab:threebp} for the numerical experiments and \cref{err:threebp} for the error plots of $u_J-u$ and $u'_J-u'$. 
		\begin{table}[htbp]
			\begin{center}
				\resizebox{\textwidth}{!}{%
					\begin{tabular}{c | c c c c c c c c | c c c c c c c c | c c c c c c c c}
						\hline
						& \multicolumn{8}{c}{FEM version $\mathcal{B}^{H^1_0(\Omega)}_{2,J}$ using \cite[Example 3.1]{HM25} with approximation order $2$} \vline & \multicolumn{8}{c}{FEM version $\mathcal{B}^{H^1_0(\Omega)}_{2,J}$ using \cite[Example 3.4]{HM25} with approximation order $3$} \vline &
						\multicolumn{8}{c}{FEM version $\mathcal{B}^{H^1_0(\Omega)}_{3,J}$ using \cite[Example 3.5]{HM25} with approximation order $4$}  \\
						\hline
						$J$ & $N_J$ & $\kappa$ & $\mathrm{E}_{L^2}$ & \multicolumn{2}{c}{$\mathrm{Ord}_{L^2,h}=\mathrm{Ord}_{L^2,N}$} & $\mathrm{E}_{H^1}$ & \multicolumn{2}{c|}{$\mathrm{Ord}_{H^1,h}=\mathrm{Ord}_{H^1,N}$} & $N_J$ & $\kappa$ & $\mathrm{E}_{L^2}$ & \multicolumn{2}{c}{$\mathrm{Ord}_{L^2,h}=\mathrm{Ord}_{L^2,N}$} & $\mathrm{E}_{H^1}$ & \multicolumn{2}{c|}{$\mathrm{Ord}_{H^1,h}=\mathrm{Ord}_{H^1,N}$} & $N_J$ & $\kappa$ & $\mathrm{E}_{L^2}$ & \multicolumn{2}{c}{$\mathrm{Ord}_{L^2,h}=\mathrm{Ord}_{L^2,N}$} & $\mathrm{E}_{H^1}$ & \multicolumn{2}{c}{$\mathrm{Ord}_{H^1,h}=\mathrm{Ord}_{H^1,N}$} \\
						\hline
						2 & 7 & 1.16E+04 & 3.87E-02 & \multicolumn{2}{c}{} & 1.01E+00 & \multicolumn{2}{c|}{} & 15 & 2.35E+04 & 2.40E-02 & \multicolumn{2}{c}{} & 4.00E-01 & \multicolumn{2}{c|}{} & & & & & & & & \\
						3 & 15 & 1.58E+04 & 2.47E-02 & \multicolumn{2}{c}{0.59} & 5.87E-01 & \multicolumn{2}{c|}{0.71} & 31 & 3.59E+04 & 1.45E-02 & \multicolumn{2}{c}{0.70} & 3.00E-01 & \multicolumn{2}{c|}{0.40} & 32 & 1.61E+03 & 1.73E-02 & \multicolumn{2}{c}{} & 2.87E-01 & \multicolumn{2}{c}{} \\
						4 & 31 & 1.84E+04 & 1.60E-02 & \multicolumn{2}{c}{0.60} & 3.77E-01 & \multicolumn{2}{c|}{0.61} & 63 & 5.11E+04 & 1.13E-02 & \multicolumn{2}{c}{0.35} & 2.13E-01 & \multicolumn{2}{c|}{0.48} & 64 & 3.24E+03 & 5.64E-03 & \multicolumn{2}{c}{1.61} & 1.62E-01 & \multicolumn{2}{c}{0.83} \\
						5 & 63 & 2.06E+04 & 5.71E-03 & \multicolumn{2}{c}{1.45} & 2.04E-01 & \multicolumn{2}{c|}{0.86} & 127 & 7.37E+04 & 5.01E-03 & \multicolumn{2}{c}{1.16} & 1.53E-01 & \multicolumn{2}{c|}{0.47} & 128 & 3.94E+03 & 3.64E-03 & \multicolumn{2}{c}{0.63} & 1.08E-01 & \multicolumn{2}{c}{0.59} \\
						6 & 127 & 2.26E+04 & 7.89E-04 & \multicolumn{2}{c}{2.82} & 9.17E-02 & \multicolumn{2}{c|}{1.14} & 255 & 9.38E+04 & 8.02E-04 & \multicolumn{2}{c}{2.63} & 6.20E-02 & \multicolumn{2}{c|}{1.30} & 256 & 4.56E+03 & 1.35E-03 & \multicolumn{2}{c}{1.43} & 7.75E-02 & \multicolumn{2}{c}{0.47} \\
						7 & 255 & 2.42E+04 & 8.31E-04 & \multicolumn{2}{c}{-0.07} & 7.03E-02 & \multicolumn{2}{c|}{0.38} & 511 & 9.64E+04 & 8.34E-04 & \multicolumn{2}{c}{-0.06} & 6.16E-02 & \multicolumn{2}{c|}{0.01} & 512 & 5.07E+03 & 1.03E-03 & \multicolumn{2}{c}{0.39} & 6.52E-02 & \multicolumn{2}{c}{0.25} \\
						8 & 511 & 2.61E+04 & 8.68E-04 & \multicolumn{2}{c}{-0.06} & 6.38E-02 & \multicolumn{2}{c|}{0.14} & 1023 & 1.00E+05 & 6.80E-04 & \multicolumn{2}{c}{0.29} & 4.85E-02 & \multicolumn{2}{c|}{0.34} & 1024 & 5.88E+03 & 3.54E-04 & \multicolumn{2}{c}{1.54} & 3.89E-02 & \multicolumn{2}{c}{0.75} \\
						9 & 1023 & 2.65E+04 & 3.19E-04 & \multicolumn{2}{c}{1.44} & 3.86E-02 & \multicolumn{2}{c|}{0.72} & 2047 & 1.05E+05 & 3.13E-04 & \multicolumn{2}{c}{1.12} & 3.67E-02 & \multicolumn{2}{c|}{0.40} & 2048 & 5.94E+03 & 2.22E-04 & \multicolumn{2}{c}{0.67} & 2.63E-02 & \multicolumn{2}{c}{0.56} \\
						10 & 2047 & 2.77E+04 & 4.64E-05 & \multicolumn{2}{c}{2.78} & 1.60E-02 & \multicolumn{2}{c|}{1.27} & 4095 & 1.07E+05 & 4.85E-05 & \multicolumn{2}{c}{2.69} & 1.54E-02 & \multicolumn{2}{c|}{1.25} & 4096 & 5.98E+03 & 8.27E-05 & \multicolumn{2}{c}{1.42} & 1.92E-02 & \multicolumn{2}{c}{0.46} \\
						\hline
						& \multicolumn{8}{c}{$\mathcal{B}^{S,H^1_0(\Omega)}_{2,J}$ using \cite[Example 3.1]{HM25} with approximation order $2$} \vline & \multicolumn{8}{c}{$\mathcal{B}^{S,H^1_0(\Omega)}_{2,J}$ using \cite[Example 3.4]{HM25} with approximation order $3$} \vline &
						\multicolumn{8}{c}{$\mathcal{B}^{S,H^1_0(\Omega)}_{3,J}$ using \cite[Example 3.5]{HM25} with approximation order $4$}  \\
						\hline
						$J$ & $N_J$ & $\kappa$ & $\mathrm{E}_{L^2}$ & $\mathrm{Ord}_{L^2,h}$ & $\mathrm{Ord}_{L^2,N}$ & $\mathrm{E}_{H^1}$ & $\mathrm{Ord}_{H^1,h}$ & $\mathrm{Ord}_{H^1,N}$ & $N_J$ & $\kappa$ & $\mathrm{E}_{L^2}$ & $\mathrm{Ord}_{L^2,h}$ & $\mathrm{Ord}_{L^2,N}$ & $\mathrm{E}_{H^1}$ & $\mathrm{Ord}_{H^1,h}$ & $\mathrm{Ord}_{H^1,N}$ & $N_J$ & $\kappa$ & $\mathrm{E}_{L^2}$ & $\mathrm{Ord}_{L^2,h}$ & $\mathrm{Ord}_{L^2,N}$ & $\mathrm{E}_{H^1}$ & $\mathrm{Ord}_{H^1,h}$ & $\mathrm{Ord}_{H^1,N}$ \\
						\hline
						2 & 11 & 1.29E+04 & 4.41E-02 &  &  & 1.02E+00 &  &  & 109 & 8.86E+04 & 2.55E-03 &  &  & 1.41E-01 &  &  & & & & & & & & \\
						3 & 30 & 1.94E+04 & 1.13E-02 & 1.97 & 1.36 & 5.28E-01 & 0.95 & 0.65 & 211 & 1.07E+05 & 3.22E-04 & 2.99 & 3.13 & 3.64E-02 & 1.95 & 2.05 & 654 & 6.16E+03 & 1.13E-05 &  &  & 2.08E-03 &  &  \\
						4 & 58 & 2.38E+04 & 2.64E-03 & 2.10 & 2.21 & 2.66E-01 & 0.99 & 1.04 & 325 & 1.14E+05 & 4.02E-05 & 3.00 & 4.82 & 9.19E-03 & 1.98 & 3.19 & 952 & 6.18E+03 & 9.54E-07 & 3.56 & 6.58 & 3.22E-04 & 2.69 & 4.97 \\
						5 & 99 & 2.63E+04 & 7.14E-04 & 1.88 & 2.44 & 1.38E-01 & 0.94 & 1.22 & 463 & 1.17E+05 & 5.50E-06 & 2.87 & 5.62 & 2.49E-03 & 1.88 & 3.69 & 1274 & 6.19E+03 & 6.49E-08 & 3.88 & 9.22 & 3.70E-05 & 3.12 & 7.42 \\
						6 & 172 & 2.86E+04 & 1.72E-04 & 2.05 & 2.58 & 6.96E-02 & 0.99 & 1.25 & 663 & 1.19E+05 & 7.16E-07 & 2.94 & 5.68 & 6.42E-04 & 1.95 & 3.77 & 1648 & 6.20E+03 & 4.39E-09 & 3.89 & 10.46 & 5.33E-06 & 2.79 & 7.52 \\
						7 & 309 & 2.99E+04 & 4.56E-05 & 1.91 & 2.27 & 3.54E-02 & 0.97 & 1.15 & & & & & & & & & & & & & & & & \\
						8 & 574 & 3.11E+04 & 1.08E-05 & 2.07 & 2.32 & 1.75E-02 & 1.01 & 1.13 & & & & & & & & & & & & & & & & \\
						9 & 1095 & 3.18E+04 & 2.85E-06 & 1.92 & 2.06 & 8.88E-03 & 0.98 & 1.05 & & & & & & & & & & & & & & & & \\
						10 & 2128 & 3.26E+04 & 6.77E-07 & 2.07 & 2.16 & 4.39E-03 & 1.02 & 1.06 & & & & & & & & & & & & & & & & \\
						\hline 
					\end{tabular}%
				}
			\end{center}
			\caption{Numerical results for \cref{ex:threebp}.}
			\label{tab:threebp}
		\end{table}
		\begin{figure}
			\includegraphics[width=0.7\textwidth]{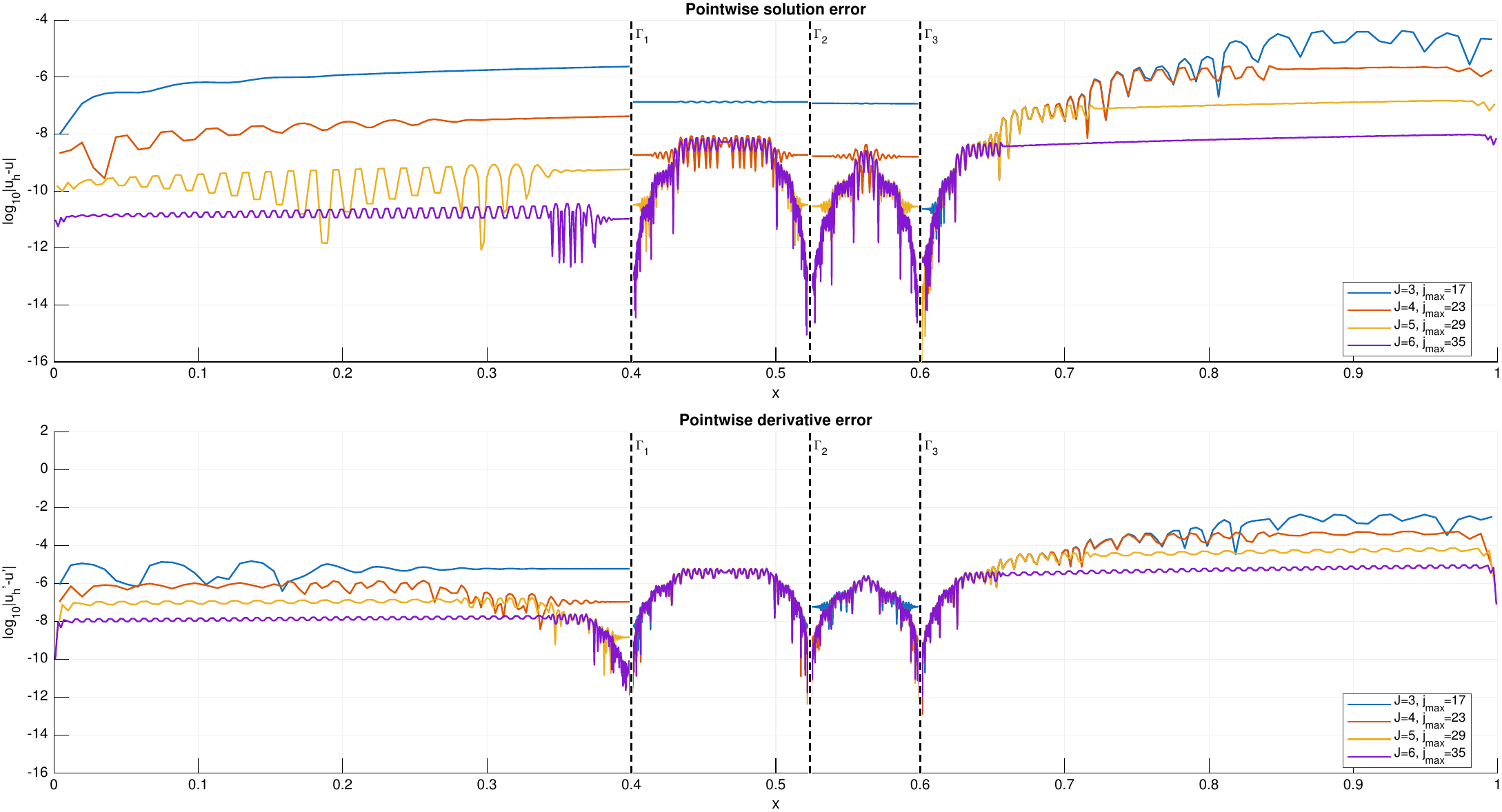}
			\caption{Error plots of $u_J-u$ (top) and $u'_J-u'$ (bottom) in \cref{ex:threebp}.}
			\label{err:threebp}
		\end{figure}
	\end{example}
	
	% Insertion-ready example for WaveEIP1DAug09.tex.
	% The Google Drive manuscript is not modified.
	\begin{example}
		\label{ex:close-threebp}
		\normalfont
		Consider the model problem \eqref{model} with $a(x) = 1 \chi_{[0,2/5)} + 10^3 \chi_{[2/5,\pi/6)} + 10^2 \chi_{[\pi/6,\pi/6+1/50)} + 10^5 \chi_{[\pi/6+1/50,1]}$, $\Gamma:=\{2/5,\pi/6,\pi/6+1/50\}$, $f=-1$, and 
		\[
		g_{2/5}=\tfrac{1820784942559719}{500000000000}, \quad 
		g_{\pi/6}=0, \quad 
		g_{\pi/6+1/50} = \tfrac{497636265803627}{62500000000}.
		\]
		The true solution $u$ can be obtained by integration. Compared to \cref{ex:threebp}, the second and third interface points in $\Gamma$ are much closer to each other. See \cref{tab:close-threebp} for the numerical experiments. Even in the presence of interface points that are very close to each other, our method still performs well. 
		\begin{table}[htbp]
			\begin{center}
				\resizebox{\textwidth}{!}{%
					\begin{tabular}{c | c c c c c c c c | c c c c c c c c | c c c c c c c c}
						\hline
						& \multicolumn{8}{c|}{$\mathcal{B}^{S,H^1_0(\Omega)}_{2,J}$ using \cite[Example 3.1]{HM25} with approximation order $2$}
						& \multicolumn{8}{c|}{$\mathcal{B}^{S,H^1_0(\Omega)}_{2,J}$ using \cite[Example 3.4]{HM25} with approximation order $3$}
						& \multicolumn{8}{c}{$\mathcal{B}^{S,H^1_0(\Omega)}_{3,J}$ using \cite[Example 3.5]{HM25} with approximation order $4$} \\
						\hline
						$J$ & $N_J$ & $\kappa$ & $\mathrm{E}_{L^2}$ & $\mathrm{Ord}_{L^2,h}$ & $\mathrm{Ord}_{L^2,N}$ & $\mathrm{E}_{H^1}$ & $\mathrm{Ord}_{H^1,h}$ & $\mathrm{Ord}_{H^1,N}$
						& $N_J$ & $\kappa$ & $\mathrm{E}_{L^2}$ & $\mathrm{Ord}_{L^2,h}$ & $\mathrm{Ord}_{L^2,N}$ & $\mathrm{E}_{H^1}$ & $\mathrm{Ord}_{H^1,h}$ & $\mathrm{Ord}_{H^1,N}$
						& $N_J$ & $\kappa$ & $\mathrm{E}_{L^2}$ & $\mathrm{Ord}_{L^2,h}$ & $\mathrm{Ord}_{L^2,N}$ & $\mathrm{E}_{H^1}$ & $\mathrm{Ord}_{H^1,h}$ & $\mathrm{Ord}_{H^1,N}$ \\
						\hline
						2  & 11   & 1.56E+04 & 3.82E-01 &      &      & 5.35E+00 &      &      & 93  & 1.21E+05 & 1.79E-02 &      &       & 1.15E+00 &      &      &      &          &          &      &       &          &      &      \\
						3  & 27   & 2.20E+04 & 3.24E-01 & 0.24 & 0.19 & 4.93E+00 & 0.12 & 0.09 & 195 & 1.46E+05 & 3.34E-03 & 2.42 & 2.27  & 4.79E-01 & 1.26 & 1.18 & 622  & 2.01E+04 & 2.67E-05 &      &       & 4.32E-02 &      &      \\
						4  & 53   & 3.01E+04 & 2.62E-02 & 3.63 & 3.73 & 1.16E+00 & 2.09 & 2.14 & 311 & 1.66E+05 & 1.29E-04 & 4.69 & 6.97  & 9.26E-02 & 2.37 & 3.52 & 922  & 2.06E+04 & 3.01E-07 & 6.47 & 11.39 & 4.59E-03 & 3.23 & 5.69 \\
						5  & 96   & 3.33E+04 & 1.90E-02 & 0.47 & 0.55 & 1.12E+00 & 0.05 & 0.06 & 453 & 1.76E+05 & 1.26E-05 & 3.35 & 6.18  & 2.98E-02 & 1.63 & 3.01 & 1248 & 2.08E+04 & 7.87E-09 & 5.26 & 12.04 & 7.46E-04 & 2.62 & 6.00 \\
						6  & 171  & 3.64E+04 & 3.69E-03 & 2.36 & 2.84 & 5.21E-01 & 1.11 & 1.33 & 659 & 1.84E+05 & 2.28E-07 & 5.79 & 10.71 & 4.24E-03 & 2.82 & 5.21 & 1630 & 2.28E+04 & 1.55E-10 & 5.67 & 14.71 & 9.91E-05 & 2.91 & 7.56 \\
						7  & 309  & 3.80E+04 & 6.19E-04 & 2.57 & 3.02 & 2.20E-01 & 1.24 & 1.45 &     &          &          &      &       &          &      &      &      &          &          &      &       &          &      &      \\
						8  & 574  & 3.96E+04 & 1.64E-04 & 1.92 & 2.15 & 1.10E-01 & 1.00 & 1.11 &     &          &          &      &       &          &      &      &      &          &          &      &       &          &      &      \\
						9  & 1095 & 4.05E+04 & 4.01E-05 & 2.03 & 2.18 & 5.29E-02 & 1.06 & 1.14 &     &          &          &      &       &          &      &      &      &          &          &      &       &          &      &      \\
						10 & 2128 & 4.15E+04 & 1.76E-05 & 1.18 & 1.24 & 3.32E-02 & 0.67 & 0.70 &     &          &          &      &       &          &      &      &      &          &          &      &       &          &      &      \\
						\hline
					\end{tabular}%
				}
			\end{center}
			\caption{Numerical results for \cref{ex:close-threebp}.}
			\label{tab:close-threebp}
		\end{table}
	\end{example}

	\begin{example} \label{exgamma}
		\normalfont
		Consider the model problem \eqref{model} with $a_- = 1$, $a_+ = 2 \times 10^4$, $\Gamma:=\sqrt{2}/2$, and $f$ is chosen such that the true solution $u$ satisfies $u_- = e^{x} -(\sin(1-\Gamma) + e^\Gamma -1)x -1$, and
		\[
		\begin{aligned}
			u_+ & = -\sin(-x+\Gamma) + e^\Gamma - (\sin(1-\Gamma) + e^\Gamma -1 )x-1.
		\end{aligned}
		\]
		For this problem, $g_\Gamma = (1-a_+) \sin(1-\Gamma) - a_+ e^\Gamma + 2 a_+ -1$. A similar problem was considered in \cite{HLL11} but with $\Gamma = \pi/6$ and nonzero endpoints. See \cref{ex:gamma} for the numerical experiments.
		\begin{table}[htbp]
			\begin{center}
				 \resizebox{\textwidth}{!}{%
					 \begin{tabular}{c | c c c c c c c c | c c c c c c c c | c c c c c c c c }
						\hline
						& \multicolumn{8}{c}{$\mathcal{B}^{S,H^1_0(\Omega)}_{2,J}$ using \cite[Example 3.1]{HM25} with approximation order $2$} \vline & \multicolumn{8}{c}{$\mathcal{B}^{S,H^1_0(\Omega)}_{2,J}$ using \cite[Example 3.4]{HM25}  with approximation order $3$ }\vline &				 \multicolumn{8}{c}{$\mathcal{B}^{S,H^1_0(\Omega)}_{3,J}$ using \cite[Example 3.5]{HM25} with approximation order $4$}
\\
						\hline
						$J$ & $N_{J}$ & $\kappa$ & $\|u_{J}-u\|_2$ & $\text{Ord}_{L^2,h}$ & $\text{Ord}_{L^2,N}$ & $\| u'_J -  u'\|_2 $ & $\text{Ord}_{H^1,h}$ & $\text{Ord}_{H^1,N}$ & $N_{J}$ & $\kappa$ & $\|u_{J}-u\|_2$ & $\text{Ord}_{L^2,h}$ & $\text{Ord}_{L^2,N}$ & $\|u'_J - u'\|_2 $ & $\text{Ord}_{H^1,h}$ & $\text{Ord}_{H^1,N}$ & $N_{J}$ & $\kappa$ & $\|u_{J}-u\|_2$ & $\text{Ord}_{L^2,h}$ & $\text{Ord}_{L^2,N}$ & $\|u_J' - u'\|_2 $ & $\text{Ord}_{H^1,h}$ & $\text{Ord}_{H^1,N}$ \\
						\hline
						2 & 10  & 4.17E+3 & 7.34E-3 & &  & 1.10E-1  &  & &  55 & 1.47E+5 &  3.96E-5 & &  & 8.95E-3 &  & & & & & & & & & \\
						3 & 21 & 3.87E+4 & 2.49E-3 & 1.56 & 1.46 & 6.33E-2  & 0.79 & 0.73 & 95 & 1.51E+5 & 3.78E-5  & 0.07 & 0.09 & 8.93E-3 & 2.72E-3 & 3.45E-3 & 256 & 1.65E+5 & 1.56E-3 & & & 4.67E-2 & &  \\
						4 & 40 & 8.29E+4 & 1.19E-4 & 4.39 & 4.72 & 1.20E-2  & 2.40 & 2.58 &  151 & 1.53E+5 &   3.92E-6 & 3.27 & 4.89 & 2.33E-3 & 1.94 & 2.90 & 368 & 1.84E+5 & 9.58E-9 & 17.31 & 33.1 & 1.42E-4 & 8.36 & 16.0 \\
						5 & 75 & 9.56E+4 & 6.43E-5 & 0.88 & 0.97 & 9.73E-3  & 0.30 & 0.33 &  239 & 2.03E+5 &  9.97E-8 & 5.30 & 8.00 & 4.49E-4 & 2.38 & 3.59  & 512 & 1.89E+5 & 8.67E-11 & 6.79 & 14.2 & 1.05E-5 & 3.75 & 7.87 \\
						6 & 142 & 1.01E+5 & 5.09E-5 & 0.34 & 0.37  &  9.01E-3 & 0.11 & 0.12  & 391 & 2.08E+5 & 7.07E-9  & 3.82 & 5.38 & 1.13E-4 & 1.99 & 2.80 & 720 & 1.90E+5 & 6.31E-12 &  3.78 & 7.69 & 3.17E-6 & 1.73 & 3.52 \\
						7 & 273 & 1.05E+5 & 1.24E-5 & 2.03 & 2.16 & 4.44E-3  & 1.02 & 1.08 & 671 & 2.08E+5 &  6.53E-10 & 3.43 & 4.41 & 3.13E-5 & 1.85 & 2.38 & & &  &  &  & & &  \\
						8 & 532 & 1.13E+5 & 2.79E-6 & 2.16 & 2.24 & 2.10E-3  & 1.08 & 1.12 &  1207 & 2.09E+5 &  6.67E-11 & 3.29 & 3.89 & 7.66E-6 & 2.03 & 2.40 & & & & & & & & \\
						9 & 1047 & 1.24E+5 & 3.78E-7 & 2.88 & 2.95 & 7.60E-4  & 1.47 & 1.50 && &   & &  &  & & & & & & & & & & \\
						10 & 2074 & 1.30E+5 & 4.92E-8 & 2.94 & 2.98 & 2.62E-4  & 1.54 & 1.56 & & &   &  &  &  & & & & & & & & & & \\
						\hline
					\end{tabular}%
				}
			\end{center}
			\caption{Numerical results for \cref{exgamma}.} %The slopes are computed by performing a linear regression on $\log_2(\|u_J-u\|_2)$ and $\log_2(\|u'_J-u'\|_2)$ versus $\log_2(N_J)$.}
			\label{ex:gamma}
		\end{table}
	\end{example}
	
	\begin{example} \label{exunknown}
		\normalfont
		Consider the model problem \eqref{model} with $a_- = 1$, $a_+ = 1000 e^x$, $\Gamma:=\pi/6$, $g_\Gamma = 0$, and $f=-1$. The true solution satisfies 
		\[
		u_- = \tfrac{x^2}{2}+cx, \quad
		u_+ = \tfrac{-(x+1+c)e^{-x}+(2+c)e^{-1}}{1000}, \quad 
		c:=-\tfrac{500\Gamma^2+(\Gamma+1)e^{-\Gamma}-2e^{-1}}
		{1000\Gamma+e^{-\Gamma}-e^{-1}}.
		\]
		See \cref{ex:unknown} for the numerical experiments.
		%
		%For the wavelet \cite[Example 3.1]{HM25} with approximation order $2$, the reference solution $u_{\text{ref}} \in \text{span} (\mathcal{B}^{S,H^1_0(\Omega)}_{2,10})$. For the multiwavelet \cite[Example 3.4]{HM25} with approximation order $3$, we use the reference solution $u_{\text{ref}} \in \text{span} (\mathcal{B}^{S,H^1_0(\Omega)}_{2,8})$. For the multiwavelet \cite[Example 3.5]{HM25} with approximation order $4$, we use the reference solution $u_{\text{ref}} \in \text{span} (\mathcal{B}^{S,H^1_0(\Omega)}_{3,6})$. See \cref{ex:unknown} for the numerical experiments.
		%
		\begin{table}[htbp]
			\begin{center}
				\resizebox{\textwidth}{!}{%
					\begin{tabular}{c | c c c c c c c c | c c c c c c c c | c c c c c c c c }
						\hline
						& \multicolumn{8}{c}{$\mathcal{B}^{S,H^1_0(\Omega)}_{2,J}$ using \cite[Example 3.1]{HM25} with approximation order $2$} \vline & \multicolumn{8}{c}{$\mathcal{B}^{S,H^1_0(\Omega)}_{2,J}$ using \cite[Example 3.4]{HM25} with approximation order $3$} \vline &
						\multicolumn{8}{c}{$\mathcal{B}^{S,H^1_0(\Omega)}_{3,J}$ using \cite[Example 3.5]{HM25} with approximation order $4$}  \\
						\hline
						$J$ & $N_{J}$ & $\kappa$ & $\|u_{J}-u\|_2$ & $\text{Ord}_{L^2,h}$ & $\text{Ord}_{L^2,N}$ & $\| u'_J -  u'\|_2 $ & $\text{Ord}_{H^1,h}$ & $\text{Ord}_{H^1,N}$ & $N_{J}$ & $\kappa$ & $\|u_{J}-u\|_2$ & $\text{Ord}_{L^2,h}$ & $\text{Ord}_{L^2,N}$ & $\|u'_J - u'\|_2 $ & $\text{Ord}_{H^1,h}$ & $\text{Ord}_{H^1,N}$ & $N_{J}$ & $\kappa$ & $\|u_{J}-u\|_2$ & $\text{Ord}_{L^2,h}$ & $\text{Ord}_{L^2,N}$ & $\|u_J' - u'\|_2 $ & $\text{Ord}_{H^1,h}$ & $\text{Ord}_{H^1,N}$ \\
						\hline
						2 & 10 & 3.96E+2 & 3.07E-3 & & & 4.55E-2 & & & 55 & 9.05E+3 & 1.76E-5 & & & 3.32E-3 & & & & & & & & & & \\
						3 & 21 & 3.25E+3 & 1.02E-3 & 1.58 & 1.48 & 2.61E-2 & 0.80 & 0.75 & 95 & 9.54E+3 & 1.71E-5 & 0.04 & 0.06 & 3.22E-3 & 0.04 & 0.06 & 256 & 1.09E+4 & 1.62E-7 & & & 3.17E-4 & & \\
						4 & 40 & 7.00E+3 & 7.45E-5 & 3.78 & 4.07 & 7.06E-3 & 1.89 & 2.03 & 151 & 1.00E+4 & 9.35E-7 & 4.19 & 6.27 & 7.60E-4 & 2.08 & 3.11 & 368 & 1.09E+4 & 2.20E-9 & 6.21 & 11.85 & 3.48E-5 & 3.19 & 6.08 \\
						5 & 75 & 8.09E+3 & 3.12E-5 & 1.25 & 1.38 & 4.61E-3 & 0.61 & 0.68 & 239 & 1.20E+4 & 1.14E-8 & 6.36 & 9.60 & 8.45E-5 & 3.17 & 4.78 & 512 & 1.36E+4 & 4.66E-11 & 5.56 & 11.67 & 5.32E-6 & 2.71 & 5.69 \\
						6 & 142 & 8.57E+3 & 2.08E-5 & 0.58 & 0.63 & 3.69E-3 & 0.32 & 0.35 & 391 & 1.22E+4 & 4.50E-9 & 1.34 & 1.89 & 5.08E-5 & 0.73 & 1.03 & 720 & 1.41E+4 & 5.57E-13 & 6.39 & 12.99 & 5.32E-7 & 3.32 & 6.76 \\
						7 & 273 & 8.88E+3 & 5.10E-6 & 2.03 & 2.15 & 1.83E-3 & 1.02 & 1.08 & 671 & 1.30E+4 & 3.60E-11 & 6.96 & 8.94 & 4.75E-6 & 3.42 & 4.39 & & & & & & & & \\
						8 & 532 & 9.34E+3 & 1.16E-6 & 2.14 & 2.22 & 8.71E-4 & 1.07 & 1.11 & 1207 & 1.30E+4 & 1.06E-11 & 1.77 & 2.09 & 2.58E-6 & 0.88 & 1.04 & & & & & & & & \\
						9 & 1047 & 1.01E+4 & 1.70E-7 & 2.77 & 2.83 & 3.37E-4 & 1.37 & 1.40 & & & & & & & & & & & & & & & \\
						10 & 2074 & 1.06E+4 & 2.57E-8 & 2.72 & 2.76 & 1.32E-4 & 1.35 & 1.37 & & & & & & & & & & & & & & & \\
						\hline
					\end{tabular}%
				}
			\end{center}
			\caption{Numerical results for \cref{exunknown}.} %The slopes are computed by performing a linear regression on $\log_2(\|u_J-u\|_2)$ and $\log_2(\|u'_J-u'\|_2)$ versus $\log_2(N_J)$.}
		\label{ex:unknown}
		\end{table}
	\end{example}
		
		\textbf{Remarks for Examples 1-5.} The first halves of \cref{ex:nogamma,tab:threebp} show the effect of ignoring the interface. Without any modifications, the space spanned by $\mathcal{B}^{H^1_0(\Omega)}_{J_0,J}$ defined in \eqref{BH1J0J} is the same as the space spanned by the standard Galerkin/FEM method at the same scale level or mesh size. Thus, low convergence rates are observed. In contrast, the second halves of the table present the results obtained using the augmented finite subset in \eqref{BSH1J0J} and exhibit the following behaviour. When we use an order 2 biorthogonal wavelet, the convergence rates in the $L^2(\Omega)$-norm and in the $H^{1}(\Omega)$-semi-norm are around 2 and 1 respectively. For an order 3 biorthogonal multiwavelet, the rates exceed 3 and 2, respectively; the same trend holds for order 4. Although $N_{J+1}/N_J < 2$ for the tested $J$, \eqref{BSH1J0J} implies $N_{J+1}/N_J \rightarrow 2$ as $J \rightarrow \infty$. As $J$ gets larger, the convergence rates are expected to approach $m$ in the $L^2(\Omega)$-norm and $m-1$ in the $H^{1}(\Omega)$-seminorm, where $m$ is the approximation order. Large values of $J$ make computation intractable due to finite-precision limits in MATLAB.
		
		\begin{example} \label{ex:2D}
			\normalfont 
			Consider the model problem \eqref{model:0} with $a(x,y) = \chi_{[0,1/3) \cup [2/3,1]}(x) + 10^5 \chi_{[1/3,2/3)}(x)$, $\Gamma:=(\{1/3\} \times (0,1)) \cup (\{2/3\} \times (0,1))$, and $f$ is chosen such that the true solution satisfies
			 \[
			 u = \sin(\pi y) (\tfrac{1}{2}(x^2-x) \chi_{[0,1/3) \cup [2/3,1]}(x) + (-\tfrac{1}{9} + 10^{-5} (\tfrac{1}{2}(x^2-x)+\tfrac{1}{9}))\chi_{[1/3,2/3)}(x) ).
			 \]
			For this problem, $g_\Gamma = 0$. Since this is a 2D problem, we take the 2D Riesz basis of the Sobolev space $H^1_0(\Omega)$ in \cite[(2.7)-(2.8)]{HM24}. I.e., we take 
			\[
			 \mathcal{B}^{H^1_0(\Omega)}_{J_0} := [2^{-J_0}\Phi^{2D}_{J_0}] \cup \cup_{j=J_0}^{\infty} [2^{-j}\Psi^{2D}_{j}],
			\]
			where
			\begin{align*}
				& \Phi^{2D}_{J_0} =\Phi_{J_0}\otimes \Phi_{J_0}, \quad \Psi^{2D}_{j} =\{\Phi_j \otimes \Psi_j,  \Psi_{j}  \otimes \Phi_{j}, \Psi_j \otimes \Psi_j\}, \\
				& \tilde{\Phi}^{2D}_{J_0} =\tilde{\Phi}_{J_0}\otimes \tilde{\Phi}_{J_0}, \quad \tilde{\Psi}^{2D}_{j} =\{\tilde{\Phi}_j \otimes \tilde{\Psi}_j,  \tilde{\Psi}_{j}  \otimes \tilde{\Phi}_{j}, \tilde{\Psi}_j \otimes \tilde{\Psi}_j\}.
			\end{align*}
			Take the following finite subset to be used in the Galerkin formulation
			\be \label{BH1:2D}
			\mathcal{B}^{H^1_0(\Omega)}_{J_0,J} := [2^{-J_0}\Phi^{2D}_{J_0}] \cup \cup_{j=J_0}^{J} [2^{-j} \Psi^{2D}_{j}],
			\ee
			and its augmented version 
			\be  \label{BSH1:2D}
			\begin{aligned}
			& \mathcal{B}^{S,H^1_0(\Omega)}_{J_0,J} := [2^{-J_0}\Phi^{2D}_{J_0}] \cup \cup_{j=J_0}^{J-1} [2^{-j} \Psi^{2D}_{j}] \cup [2^{-J} \Psi^{S,2D}_{J}], \quad \text{where}\\
			& \Psi^{S,2D}_{J} := \{\Phi_J \otimes \Psi_J,  \breve{\Psi}_{J}  \otimes \Phi_{J}, \breve\Psi_J \otimes \Psi_J\} \quad \text{and} \quad  \breve{\Psi}_{J}:= \Psi_J \cup \cup_{j=J+1}^{(2m-2)J-1} [2^{J-j}\mathcal{S}_j]
 			\end{aligned}
 			\ee
			with $\mathcal{S}_j$ defined in \eqref{BSH1J0J}. See \cref{tab:2D} for the numerical experiments. 
			\begin{table}[htbp]
				\begin{center}
					\resizebox{\textwidth}{!}{%
						\begin{tabular}{c | c c c c c c c | c c c c c c c}
							\hline
							& \multicolumn{7}{c}{$\mathcal{B}^{S,H^1_0(\Omega)}_{3,J}$ in \eqref{BSH1:2D} using \cite[Example 3.5]{HM25} with approximation order $4$}
							\vline & \multicolumn{7}{c}{FEM version $\mathcal{B}^{H^1_0(\Omega)}_{3,J}$ in \eqref{BH1:2D} using \cite[Example 3.5]{HM25} with approximation order $4$}  \\
							\hline
							$J$
							& $N_J$ & $\mathrm{E}_{L^2}$ & $\mathrm{Ord}_{L^2,h}$ & $\mathrm{Ord}_{L^2,N}$ & $\mathrm{E}_{H^1}$ & $\mathrm{Ord}_{H^1,h}$ & $\mathrm{Ord}_{H^1,N}$
							& $N_J$ & $\mathrm{E}_{L^2}$ & $\mathrm{Ord}_{L^2,h}$ & $\mathrm{Ord}_{L^2,N}$ & $\mathrm{E}_{H^1}$ & $\mathrm{Ord}_{H^1,h}$ & $\mathrm{Ord}_{H^1,N}$ \\
							\hline
							3 &  15168  & 1.22E-07 &      &      & 2.19E-04 &      &
							&1024  & 1.62E-03 &      &      & 3.20E-02 &      &      \\
							4 &  43008  & 5.39E-09 & 4.50 & 5.98 & 2.73E-05 & 3.00 & 3.99
							& 4096  & 1.35E-03 & 0.27 & 0.27 & 2.86E-02 & 0.16 & 0.16 \\
							5 & 114688 & 3.46E-10 & 3.96 & 5.60 & 3.42E-06 & 3.00 & 4.24
							& 16384 & 3.76E-04 & 1.84 & 1.84 & 1.45E-02 & 0.98 & 0.98 \\
							6 &  303104 & 2.04E-11 & 4.09 & 5.83 & 4.27E-07 & 3.00 & 4.28
							&  65536 & 3.17E-04 & 0.25 & 0.25 & 1.31E-02 & 0.14 & 0.14 \\
							\hline
						\end{tabular}%
					}
				\end{center}
				\caption{Numerical results for \cref{ex:2D}.}
				\label{tab:2D}
			\end{table}

	\end{example}

	\textbf{Remarks for Example 6.} Since the interface curve $\Gamma$ in \cref{ex:2D} consists of two vertical lines, we can simply enrich the 2D wavelet basis only in the $x$-direction as described in \eqref{BSH1:2D}. The resulting 2D enriched wavelet basis is simpler than the one used in \cite{HM24}, which deals with general one-dimensional interface curves.  \cref{tab:2D} once again shows that when the singularities along the vertical interface lines are neglected, we observe reduced convergence rates. Our 2D enriched wavelet basis in \eqref{BSH1:2D} successfully resolves these singularities. It is also due to the simplicity of these vertical interface lines that we are able to achieve convergence rates of order 3 in the $H^1(\Omega)$-norm.
	
\section{Proof of \cref{thm:main}} \label{sec:proof}

Because the interface $\Gamma$ is a finite subset of the domain $\Omega$ and $-(a u')'=f-g_\Gamma \delta_\Gamma$ in \eqref{model} is a linear differential equation, it suffices to prove \cref{thm:main} for $\Gamma$ being a singleton. For simplicity, in the rest of this section, we shall use $\Gamma$ itself to denote an interface point in $\Omega$ and we define subdomains
\[
\Omega_-:=(0, \Gamma),\qquad \Omega_+:=(\Gamma,1).
\]
To prove our main result \cref{thm:main},	 we first present a proposition, which will be used in the proof of the $L^{2}(\Omega)$ convergence.
It is a standard result (e.g., see \cite[Lemma 3.7]{LAH09}), but we choose to include it to make the paper self-contained.
	\begin{prop}
		Consider the model problem \eqref{model}, where the source term $f \in L^2(\Omega)$, $g_\Gamma =0$, and the variable diffusion coefficient $a \in L^{\infty}(\Omega)$ satisfies $\text{ess-inf}_{x\in \Omega}(a(x)) > 0$, $a'_- \in L^{\infty}(\Omega_-)$, and $a'_+ \in L^{\infty}(\Omega_+)$. Then, the following bounds on the solution $u$ hold
		\be \label{stab:bound}
		\|u\|_{H^{2}(\Omega_-)} \le C \|f\|_{L^{2}(\Omega)} \quad \text{and} \quad \|u\|_{H^{2}(\Omega_+)} \le C \|f\|_{L^{2}(\Omega)},
		\ee
		where $C$ is a generic constant that only depends on the diffusion coefficient $a$.
	\end{prop}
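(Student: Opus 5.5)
The plan is to obtain first an $H^1$ bound on $u$ from the weak formulation, and then to upgrade it to $H^2$ on each subdomain by reading the equation pointwise on $\Omega_\pm$. With $g_\Gamma=0$ the problem \eqref{model} has the weak form
\[
\la a u', v'\ra=\la f,v\ra \qquad \text{for all } v\in H^1_0(\Omega).
\]
Taking $v=u$, using $a\ge a_{\min}:=\text{ess-inf}\,a>0$ together with the Poincar\'e inequality on $(0,1)$, we get
\[
a_{\min}\|u'\|_{L^2(\Omega)}^2\le \|f\|_{L^2(\Omega)}\|u\|_{L^2(\Omega)}\le C_P\|f\|_{L^2(\Omega)}\|u'\|_{L^2(\Omega)} .
\]
Hence $\|u\|_{H^1(\Omega)}\le C\|f\|_{L^2(\Omega)}$, with $C$ depending only on $a_{\min}$.

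Next, on each subdomain $\Omega_\pm$ the distributional identity $-(au')'=f$ holds, obtained by testing with $v\in C_c^\infty(\Omega_\pm)$. Therefore the flux $w:=au'$ satisfies $w'=-f\in L^2(\Omega_\pm)$, and $w\in L^2(\Omega_\pm)$ because $a\in L^\infty$ and $u'\in L^2$. So $w\in H^1(\Omega_\pm)$ with
\[
\|w\|_{H^1(\Omega_\pm)}\le \|a\|_{L^\infty}\|u'\|_{L^2}+\|f\|_{L^2}\le C\|f\|_{L^2(\Omega)} .
\]
In one dimension this also gives $w\in L^\infty(\Omega_\pm)$, bounded by $C\|w\|_{H^1(\Omega_\pm)}$.

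Since $a_\pm\in W^{1,\infty}(\Omega_\pm)$ is bounded below by $a_{\min}$, we have $1/a_\pm\in W^{1,\infty}(\Omega_\pm)$ with $(1/a)'=-a'/a^2$. Then $u'=w/a$ is a product of an $H^1$ function and a $W^{1,\infty}$ function, so $u'\in H^1(\Omega_\pm)$ with
\[
u''=\frac{w'}{a}-\frac{a'w}{a^2}=-\frac{f}{a}-\frac{a'u'}{a}.
\]
This yields
\[
\|u''\|_{L^2(\Omega_\pm)}\le a_{\min}^{-1}\|f\|_{L^2}+a_{\min}^{-1}\|a'_\pm\|_{L^\infty}\|u'\|_{L^2}\le C\|f\|_{L^2(\Omega)} .
\]
Combining this with the $H^1$ bound gives \eqref{stab:bound}.

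The argument never uses a jump condition at $\Gamma$, since both estimates are purely interior to $\Omega_\pm$ plus the global $H^1$ bound. The only point needing care is the product rule for $w/a$ in Sobolev spaces on an interval, which is standard. The constant depends only on $a_{\min}$, $\|a\|_{L^\infty}$ and $\|a'_\pm\|_{L^\infty}$.
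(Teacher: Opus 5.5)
Your proof is correct and follows the paper's argument. First, an energy estimate from the weak form with $v=u$ and the 1D Poincar\'e inequality gives $\|u\|_{H^1(\Omega)}\le C\|f\|_{L^2(\Omega)}$; then the identity $u''=-a^{-1}(f+a'u')$ on each $\Omega_\pm$ bounds $\|u''\|_{L^2(\Omega_\pm)}$. Your detour through the flux $w=au'\in H^1(\Omega_\pm)$ and the product rule for $w/a$ justifies a step the paper simply asserts, namely that $u''$ exists in $L^2(\Omega_\pm)$ and satisfies this identity.
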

	\begin{proof}
 		Define $a_{\inf}:=\text{ess-inf}_{x\in \Omega}(a(x))$. The equation in \eqref{model} implies that $u_{-}'' = -a_{-}^{-1} (f_- + a_-' u_-')$ holds in the $L^{2}(\Omega_-)$ sense, and
		\[
		\| u_-''\|_{L^{2}(\Omega_{-})} \le a_{\inf}^{-1} \left( \|f\|_{L^{2}(\Omega)} + \|a_-'\|_{L^{\infty}(\Omega_-)} \|u_-'\|_{L^{2}(\Omega_-)}\right)
		\le C_a ( \|f\|_{L^{2}(\Omega)} +  \|u'\|_{L^{2}(\Omega)} ),
		\]
		where $C_a := a_{\inf}^{-1} (1+ \|a_-'\|_{L^{\infty}(\Omega_-)} )$.
		%
		 %\textcolor{red}{Additionally, by Poincar\'e's inequality, we have $\|u\|_{L^{2}(\Omega)} \le C_{P}  \|u'\|_{L^{2}(\Omega)}$, where $C_{P}$ only depends on $\Omega$.}
		%
		Since $u(0)=0$ in \eqref{model} and $u(x)=\int_0^x u'(t) dt$, we have $|u(x)| \le \|u'\|_{L^2(\Omega)}$ for all $x\in (0,1)$ and
		\begin{equation} \label{poinc:1D}
		\|u\|_{L^2(\Omega)}\le \|u'\|_{L^2(\Omega)}.
		\end{equation}
		
		Note that the weak formulation of \eqref{model} is to find $u$ such that $\la au', v' \ra  =\la f, v\ra$ for all $v \in H^{1}_{0}(\Omega)$.  Taking $v=u$, we have
		$a_{\inf} \| u'\|^2_{L^{2}(\Omega)} \le \|f\|_{L^{2}(\Omega)} \|u\|_{L^{2}(\Omega)}.$
		By \eqref{poinc:1D}, it follows that
		$\| u'\|_{L^{2}(\Omega)} \le a_{\inf}^{-1} \|f\|_{L^{2}(\Omega)}$ and
			$\| u\|_{L^{2}(\Omega)} \le a_{\inf}^{-1} \|f\|_{L^{2}(\Omega)}$.
		Therefore,
		\[
			\|u\|_{H^{2}(\Omega_-)} \le \| u_-'' \|_{L^{2}(\Omega_{-})} + 	\| u'\|_{L^{2}(\Omega)} + 	\| u\|_{L^{2}(\Omega)} \le  C\|f\|_{L^{2}(\Omega)},
		\]
		where $C:=C_a ( 1 + a_{\inf}^{-1}) + 2 a_{\inf}^{-1}$. The estimate for $\|u\|_{H^{2}(\Omega_+)}$ is similarly obtained.
	\end{proof}
	
	The proof of \cref{thm:main} is presented below and involves
	%on the optimal convergence rates of our wavelet Galerkin method with respect to the approximation order of the underlying basis, and the uniform boundedness of the condition number of the coefficient matrix.
	%As can be seen from the discussion below, the proof involves an
	in-depth analysis of the dual wavelets.

	\begin{proof}[Proof of \cref{thm:main}]
	We shall estimate the magnitude of $\la u, 2^j \tilde{\eta}_j\ra$ for $\tilde{\eta}_j \in \tilde{\Psi}_j$ in the wavelet representation \eqref{expr}.
	Recall that
	$f_{j;k}(x) := 2^{j/2} f(2^j x -k)$ for scaled and translated versions of a function $f$, and $\mbox{Supp}(f)$ is the smallest closed interval such that $f$ vanishes outside it. By the definition of $\mathcal{B}^{H^1_0(\Omega)}_{J_0}$ in
	\eqref{BH1J0}, each set $\tilde{\Psi}_j$ is generated by scaling and translating three types of basic wavelets (see the discussion preceding \eqref{BH1J0} or \cite{HM21}):
	interior dual wavelets $\tilde{\psi}$ on $\R$, left boundary dual wavelets $\tilde{\psi}^L$ on $[0,\infty)$, and right boundary dual wavelets $\tilde{\psi}^R$ on $(-\infty,1]$.  For interior dual wavelets, we define
		\[
	\tilde{\psi}^{[0]}(x) := \tilde{\psi}(x) \quad \text{and} \quad \tilde{\psi}^{[n]}(x) := \int_{-\infty}^{x} \tilde{\psi}^{[n-1]}(t) dt, \quad
	x\in \R, n\in \N.
		\]
	For left boundary dual wavelets $\tilde{\psi}^L$ and right boundary dual wavelets $\tilde{\psi}^R$,  we define
	\begin{align*}
	& \tilde{\psi}^{L,[0]}(x) :=
	\tilde{\psi}^L(x), \quad  \tilde{\psi}^{L,[n]}(x) := -\int_{x}^{\infty} \tilde{\psi}^{L,[n-1]}(t) dt, \quad x\in [0,\infty),\; n \in \N, \\
	& \tilde{\psi}^{R,[0]}(x) := \tilde{\psi}^R(x), \quad \tilde{\psi}^{R,[n]}(x) := \int_{-\infty}^{x} \tilde{\psi}^{R,[n-1]}(t) dt, \quad x\in (-\infty, 1],\; n \in \N.
	\end{align*}
Obviously, $(\tilde{\psi}^{[n]})'=\tilde{\psi}^{[n-1]}$,
$(\tilde{\psi}^{L,[n]})'=\tilde{\psi}^{L,[n-1]}$, and
$(\tilde{\psi}^{R,[n]})'=\tilde{\psi}^{R,[n-1]}$ for all $n\in \N$.
Because $\tilde{\psi}$ on $\R$ has compact support, applying the conditions of vanishing moments in \eqref{vm:psi}, we can deduce from the definition of the continuous functions $\tilde{\psi}^{[n]}, n\in \N$ that
\begin{equation}\label{psi:n}
\mbox{Supp}(\tilde{\psi}^{[n]})\subseteq
\mbox{Supp}(\tilde{\psi}), \qquad n=1,\ldots, m.
\end{equation}
For simplicity of discussion, without loss of generality, we shall assume $0\in \mbox{Supp}(\tilde{\psi}^L)$ and $1\in \mbox{Supp}(\tilde{\psi}^R)$. Hence, we have
$\mbox{Supp}(\tilde{\psi}^L)=[0,c]$ and
$\mbox{Supp}(\tilde{\psi}^R)=[d,1]$ with $c\ge 0$ and $d\le 1$.
By the definition of $\tilde{\psi}^{L,[n]}$ and $\tilde{\psi}^{R,[n]}$ for $n\in  \N$, we trivially conclude that all such continuous and differentiable functions satisfy
%\[
%\tilde{\psi}^{L,[n]}(x)=0, \quad
%\tilde{\psi}^{R,[n]}(y)=0, \quad \forall\; x\ge %c, y\le d, n=1,\ldots,m-1,\]
%which just imply
%
\begin{equation}\label{LR:supp}
\mbox{Supp}(\tilde{\psi}^{L,[n]})\subseteq [0,c],\quad
\mbox{Supp}(\tilde{\psi}^{R,[n]})\subseteq [d,1],\quad n=1,\ldots, m.
\end{equation}
Similarly, applying the conditions in \eqref{vm:LR} and using integration by parts, we have
\begin{equation}\label{zeroval}
\tilde{\psi}^{L,[n]}(0)=0,\quad
\tilde{\psi}^{R,[n]}(1)=0,\quad  n=2,\ldots,m.
\end{equation}
However, $\tilde{\psi}^{L,[1]}(0)\ne 0$ and $\tilde{\psi}^{R,[1]}(1)\ne 0$ can happen.
Here we prove \eqref{zeroval} for $\tilde{\psi}^L$. The claims for $\tilde{\psi}$ in \eqref{psi:n} and $\tilde{\psi}^R$ in \eqref{zeroval} are similar.
Note that \eqref{LR:supp} trivially implies
\[
x^j \tilde{\psi}^{L,[n]}(x)|_{x=0}=0,\qquad
x^j \tilde{\psi}^{L,[n]}(x)|_{x=c}=0,\qquad \mbox{for all } j, n\in \N.
\]
Repeatedly applying integration by parts and removing the boundary values by the above identities, for $n=2,\ldots m$, we conclude the identities \eqref{zeroval} for $\tilde{\psi}^L$ from
\begin{align*}
\tilde{\psi}^{L,[n]}(0)
&:=-\int_0^\infty \tilde{\psi}^{L, [n-1]}(t) dt
= -\int_0^c \tilde{\psi}^{L, [n-1]}(t) dt
=\int_0^c t \tilde{\psi}^{L,[n-2]}(t)dt\\
&=\cdots
=\frac{(-1)^{n-1} }{(n-1)!} \int_0^c t^{n-1} \tilde{\psi}^{L,[0]} (t) dt=
\frac{(-1)^{n-1}}{(n-1)!} \int_0^\infty t^{n-1} \tilde{\psi}^{L} (t) dt=
0,
\end{align*}
where we used the assumption \eqref{vm:LR} and
the trivial fact that $n-1\ge 1$ for all $n=2,\ldots,m$.

In what follows, for $\tilde{\eta}_j \in \tilde{\Psi}_j$, we shall consider three cases: $\mbox{Supp}(2^{j} \tilde{\eta}_j) \subseteq \{0\}\cup \Omega_-$, $\mbox{Supp}(2^{j} \tilde{\eta}_j) \subseteq \Omega_+\cup \{1\}$, and $\Gamma \in \mbox{Supp}(2^{j} \tilde{\eta}_j)$. It is clear that $\mbox{Supp}(2^{j} \tilde{\eta}_j) = \mbox{Supp}(\tilde{\eta}_j)$. Also, by change of variables, we have $\| {\tilde{\psi}}^{[m]}_{j;k}\|_{L^2(\Omega)} = \| {\tilde{\psi}}^{[m]}\|_{L^2(\text{Supp}(\tilde{\psi}))}$,  $\| {\tilde{\psi}}^{L,[m]}_{j;0}\|_{L^2(\Omega)} = \| {\tilde{\psi}}^{L,[m]}\|_{L^2(\text{Supp}(\tilde{\psi}^{L}))}$, and  $\| {\tilde{\psi}}^{R,[m]}_{j;2^j-1}\|_{L^2(\Omega)} = \| {\tilde{\psi}}^{R,[m]}\|_{L^2(\text{Supp}(\tilde{\psi}^R))}$.

First, we consider the first case $\mbox{Supp}(2^{j} \tilde{\eta}_j) \subseteq \{0\}\cup \Omega_-$. Then we have
two subcases: $\tilde{\eta}_j = \tilde{\psi}_{j;k}$ for some $k \in \Z$ or $\tilde{\eta}_j = \tilde{\psi}^L_{j;0}$.
Because $\Omega_-$ is open, we must have $\mbox{Supp}(2^{j} \tilde{\eta}_{j})\subseteq [0,\Gamma_-]$ with $\Gamma_-<\Gamma$.
Recall that $u_-= u$ on $\Omega_-$ and $u_- \in H^{m}(\Omega_-)$.
We first consider the subcase $\tilde{\eta}_j = \tilde{\psi}_{j;k}$.  Note that \eqref{psi:n} trivially implies $\tilde{\psi}^{[n]}_{j;k}(0)=
\tilde{\psi}^{[n]}_{j;k}(\Gamma_-)=0$ for all $n\in \N$. Now using integration by parts and noting that the boundary values must be zero, we have
	\begin{equation}\label{tpsi}
		\begin{aligned}
			\la u, 2^j \tilde{\psi}_{j;k} \ra
			& :=\int_0^{\Gamma_-} u(x) 2^j \tilde{\psi}_{j;k}(x) dx = u(x) \tilde{\psi}^{[1]}_{j;k}(x)|_{x=0}^{x=\Gamma_-}
- 2^{-j} \int_0^{\Gamma_-} u'(x) 2^j \tilde{\psi}^{[1]}_{j;k}(x) dx \\
			& = - \int_0^{\Gamma_-} u'(x) \tilde{\psi}^{[1]}_{j;k}(x) dx = \ldots
			%= (-1)^m 2^{-mj} \int_0^{\Gamma} u^{(m)}(x) 2^j {\tilde{\psi}}^{[m]}_{j;k}(x) dx \\
			%&
			= (-1)^m 2^{(1-m)j} \int_{\text{Supp}(\tilde{\psi}_{j;k})} u^{(m)}(x) \tilde{\psi}^{[m]}_{j;k}(x) dx.
		\end{aligned}	
	\end{equation}
	By \eqref{tpsi} and the Cauchy-Schwarz inequality, we have
	\begin{equation} \label{upsijk:smooth}
	|\la u, 2^j \tilde{\psi}_{j;k} \ra|
	%= 2^{(1-m)j} |\la u^{(m)}, {\tilde{\psi}}^{[m]}_{j;k} \ra|
	\le 2^{(1-m)j} \|u^{(m)}\|_{L^{2}(\text{Supp}(\tilde{\psi}_{j;k}))} \|{\tilde{\psi}}^{[m]}_{j;k} \|_{L^{2}(\text{Supp}(\tilde{\psi}_{j;k}))}
	\le C_1 2^{(1-m)j} \|u^{(m)}\|_{L^{2}(\text{Supp}(\tilde{\psi}_{j;k}))},
	\end{equation}
where the constant $C_1$ is defined by
	\[
	C_1 := \max\left\{
	\| {\tilde{\psi}}^{L,[m]}\|_{L^2(\text{Supp}(\tilde{\psi}^{L}))},
	\| {\tilde{\psi}}^{[m]}\|_{L^2(\text{Supp}(\tilde{\psi}))},
	 \| {\tilde{\psi}}^{R,[m]}\|_{L^2(\text{Supp}(\tilde{\psi}^R))}
	\right\}.
	\]
The subcase $\tilde{\eta}_j = \tilde{\psi}^L_{j;0}$ is similar. By $\tilde{\psi}^{L,[1]}_{j;0}(\Gamma_-)=0$ in \eqref{LR:supp} and the assumption $u(0)=0$, we have
\[
	\la u, 2^j \tilde{\psi}^L_{j;0} \ra
	:=\int_0^{\Gamma_-} u(x) 2^j \tilde{\psi}^L_{j;0}(x) dx
 =u(x)\tilde{\psi}^{L,[1]}_{j;0}(x)|_{x=0}^{x=\Gamma_-}
	-\int_0^{\Gamma_-} u'(x) \tilde{\psi}^{L,[1]}_{j;0}(x) dx
=-\int_0^{\Gamma_-} u'(x) \tilde{\psi}^{L,[1]}_{j;0}(x) dx
\]
and by repeatedly applying integration by parts and using \eqref{LR:supp} and \eqref{zeroval}, we have
	\[
	\la u, 2^j \tilde{\psi}^L_{j;0} \ra
=
- \int_0^{\Gamma_-} u'(x) \tilde{\psi}^{L,[1]}_{j;0}(x) dx
	=\cdots
	= (-1)^m 2^{(1-m)j} \int_{\text{Supp}(\tilde{\psi}^L_{j;0})} u^{(m)}(x) \tilde{\psi}^{L,[m]}_{j;0}(x) dx.
	\]
	By the same argument as in \eqref{upsijk:smooth}, we have
	\be \label{upsijk:smooth:L}
	|\la u, 2^j \tilde{\psi}^L_{j;0}\ra|
	\le C_1 2^{(1-m)j} \| u^{(m)}\|_{L^2(\text{Supp}(\tilde{\psi}^L_{j;0}))}.
	\ee
	
	The second case $\mbox{Supp}(2^{j} \tilde{\eta}_j) \subseteq \Omega_+\cup\{1\}$ with $\tilde{\eta}_j \in \tilde{\Psi}_j$ is almost identical to the first case. We just change $u_-$, $u(0)=0$ and $[0,\Gamma_-]$ to $u_+$, $u(1)=0$, and $[\Gamma_+,1]$, respectively, where $\mbox{Supp}(2^{j} \tilde{\eta}_j)\subseteq [\Gamma_+,1]$ and $\Gamma<\Gamma_+\le 1$. Consequently, \eqref{upsijk:smooth} still holds and \eqref{upsijk:smooth:L} becomes
	\be \label{upsijk:smooth:R}
	|\la u, 2^j \tilde{\psi}^R_{j;2^j-1}\ra|
	\le C_1 2^{(1-m)j} \| u^{(m)}\|_{L^2(\text{Supp}(\tilde{\psi}^R_{j;2^j-1}))}.
	\ee

Note that $\Gamma$ is an interior point of the open domain $\Omega$ and the supports of $\tilde{\psi}^L_{j;0}$ and $\tilde{\psi}^R_{j;2^j-1}$ approach $0$ and $1$ respectively as $j$ gets larger. As our last case, it is enough to consider $\tilde{\eta}_j = \tilde{\psi}_{j;k}$, where $\Gamma \in \mbox{Supp}(2^{j} \tilde{\eta}_j)$ and $\mbox{Supp}(2^{j} \tilde{\eta}_j)\subseteq \Omega$.
%Because $\Gamma$ is an interior point of the open domain $\Omega$, we must have $\tilde{\eta}_j=2^j \tilde{\psi}_{j;k}$ and $\mbox{Supp}(\tilde{\psi}_{j;k})\subseteq \Omega$.
%	Then, we have
%	%
%	\begin{align}
%	\nonumber
%	& \la u, 2^j \tilde{\psi}_{j;k} \ra
%	:= \int_0^\Gamma u(x) 2^j \tilde{\psi}_{j;k}(x) dx + \int_\Gamma^1 u(x) 2^j \tilde{\psi}_{j;k}(x) dx \\
%	\nonumber
%	& \quad = (u(\Gamma + ) - u(\Gamma -)){\tilde{\psi}}^{[1]}_{j;k}(\Gamma) - u(0) {\tilde{\psi}}^{[1]}_{j;k}(0) - \int_0^\Gamma  u'(x) {\tilde{\psi}}^{[1]}_{j;k}(x) dx
%	+ u(1) {\tilde{\psi}}^{[1]}_{j;k}(1) - \int_\Gamma^1  u'(x) {\tilde{\psi}}^{[1]}_{j;k}(x) dx\\
%	\label{upsijk:gam}
%	& \quad = - \int_{\Omega_+} u'(x) {\tilde{\psi}}^{[1]}_{j;k}(x) dx
%	- \int_{\Omega_-} u'(x) {\tilde{\psi}}^{[1]}_{j;k}(x) dx,
%	\end{align}
	%
Then by $u\in H^1(\Omega)$ with $u(0)=u(1)=0$, we conclude from integration by parts that
\[
\la u, 2^j \tilde{\psi}_{j;k} \ra
=-\int_\Omega u'(x) \tilde{\psi}^{[1]}_{j;k}(x) dx=
-\int_{\Omega_-} u_-'(x) \tilde{\psi}^{[1]}_{j;k}(x) dx
-\int_{\Omega_+} u_+'(x) \tilde{\psi}^{[1]}_{j;k}(x) dx.
\]
Since $u_+ \in H^{m}(\Omega_+)$ and $u_- \in H^{m}(\Omega_-)$ with $m \ge 2$, we trivially have $u_+ \in H^{2}(\Omega_+)$ and $u_- \in H^{2}(\Omega_-)$.
Because $u_-' \in H^{1}(\Omega_-)\subset C(\overline{\Omega_-})$ and
$u_+' \in H^{1}(\Omega_+)\subset C(\overline{\Omega_+})$ by the Sobolev Embedding Theorem,
there is $C_0>0$ such that $\|u'_-\|_{L^\infty(\Omega_-)}
+\|u'_+\|_{L^\infty(\Omega_+)}\le C_0(\|u_-\|_{H^2(\Omega_-)} +
\|u_+\|_{H^2(\Omega_+)})$ and
	 \begin{equation}\label{upsijk:gamma}
	|\la u, 2^j \tilde{\psi}_{j;k}\ra|
\le (\|u'\|_{L^{\infty}(\Omega_{-})} + \|u'\|_{L^{\infty}(\Omega_{+})}) \|\tilde{\psi}^{[1]}_{j;k}\|_{L^1(\R)}
\le 2^{-j/2} C_2
 (\|u_-\|_{H^{m}(\Omega_{-})} + \|u_+\|_{H^{m}(\Omega_{+})}),
 	\end{equation}
	where $C_2:=C_0 \|\tilde{\psi}^{[1]}\|_{L^1(\R)} < \infty$ and
we have used the fact that $\|\tilde{\psi}^{[m]}_{j;k}\|_{L^1(\R)} = 2^{-j/2} \|\tilde{\psi}^{[m]}\|_{L^1(\R)}$.
	
	Now, for a given level $j$, define
	\[
	\tilde{\mathcal{Y}}_j := \{\tilde{\eta} \in \tilde{\Psi}_j : \Gamma \in \mbox{Supp}(\tilde{\eta})\} \quad \text{and} \quad
	\mathcal{Y}_j := \{\eta \in \Psi_j : \tilde{\eta} \in \tilde{\mathcal{Y}}_j\}.
	\]
	That is, the set $\mathcal{Y}_j$ contains all (primal) wavelets at scale level $j$, whose dual parts touch or overlap with the interface $\Gamma$. It is easy to see that the cardinality of the set $\tilde{\mathcal{Y}}_j$ is $|\tilde{\mathcal{Y}}_j|  = r(\lfloor 2^{j} \Gamma - l_{\tilde{\psi}}\rfloor - \lceil 2^{j} \Gamma - h_{\tilde{\psi}}\rceil + 1)$ and  $|\tilde{\mathcal{Y}}_j| = |\mathcal{Y}_j|$, where $r$ is the multiplicity of $\tilde{\psi}$ and $\text{Supp}(\tilde{\psi}) := [l_{\tilde{\psi}},h_{\tilde{\psi}}]$. Due to the compact support of $\tilde{\psi}$, there is a positive constant $C_\Gamma$, which depends on the wavelet basis and the interface $\Gamma$, such that $|\tilde{\mathcal{Y}}_j| \le C_\Gamma$ for all $j \ge J_0$. It follows from \eqref{upsijk:gamma} that
	\begin{equation} \label{sum:u:psitil:K}
		\begin{aligned}
			 \sum_{j=(2m-2)J}^{\infty} \sum_{\tilde{\eta}_j \in \tilde{\mathcal{Y}}_j} |\la u, 2^j \tilde{\eta}_{j} \ra|^2  & \le C_\Gamma C_2^2 \sum_{j=(2m-2)J}^{\infty}  2^{-j}  \left(\|u\|_{H^{m}(\Omega_+)} + \|u\|_{H^{m}(\Omega_-)}\right)^2 \\
			& \le 2 C_\Gamma C_2^2 2^{-2(m-1)J} \left(\|u\|_{H^{m}(\Omega_+)} + \|u\|_{H^{m}(\Omega_-)}\right)^2.
		\end{aligned}
	\end{equation}
	Define $\tilde{\mathcal{Y}}_j^{c} := \tilde{\Psi}_j \backslash \tilde{\mathcal{Y}}_j$ and $\mathcal{Y}_j^{c} := \Psi_j \backslash \mathcal{Y}_j$. By \eqref{upsijk:smooth} for $\text{Supp}(2^j \tilde{\eta}_{j} )\subseteq \Omega_-$ or $\text{Supp}(2^j \tilde{\eta}_{j} ) \subseteq \Omega_+$, \eqref{upsijk:smooth:L}, and \eqref{upsijk:smooth:R}, we have
	\be \label{sum:u:psitil:Kc}
		\begin{aligned}
			\sum_{j=J+1}^{\infty} \sum_{\tilde{\eta}_j \in \tilde{\mathcal{Y}}_j^c} |\la u, 2^j \tilde{\eta}_{j} \ra|^2
			& \le \sum_{j=J+1}^{\infty} C_1^2 2^{-2(m-1)j} \sum_{\tilde{\eta}_j \in \tilde{\mathcal{Y}}_j^c} \|u^{(m)}\|^2_{L^{2}(\text{Supp}(\tilde{\eta}_j))} \\
			& \le C_1^2 C_{\tilde{\psi}} \sum_{j=J+1}^{\infty}  2^{-2(m-1)j}
			 \left(\|u\|_{H^{m}(\Omega_+)} + \|u\|_{H^{m}(\Omega_-)}\right)^2 \\
			& \le 2 C_1^{2} C_{\tilde{\psi}} 2^{-2(m-1)(J+1)} \left(\|u\|_{H^{m}(\Omega_+)} + \|u\|_{H^{m}(\Omega_-)}\right)^2,
		\end{aligned}
	\ee
	where $C_{\tilde{\psi}}:=\max\{r,\#\tilde{\psi}^L,\#\tilde{\psi}^R\} \max\{|\text{Supp}(\tilde{\psi})|,|\text{Supp}(\tilde{\psi}^L)|,|\text{Supp}(\tilde{\psi}^R)|\}$, $\#\tilde{\psi}^L$, $\#\tilde{\psi}^R$ stand for the number of the left/right boundary dual wavelets, and $|\text{Supp}(\tilde{\psi})|,|\text{Supp}(\tilde{\psi}^L)|,|\text{Supp}(\tilde{\psi}^R)|$ stand for the support lengths of the dual wavelet functions.
		
	Since we assume that $u \in H^{1}_{0}(\Omega)$, the solution $u$ admits the following wavelet representation
	\[
	u = \sum_{\eta \in \Phi_{J_0}} \la u, \tilde{\eta} \ra \eta + \sum_{j  = J_0}^{\infty} \sum_{\eta_j \in \Psi_{j}} \la u, \tilde{\eta}_j \ra \eta_j  =
	\sum_{\eta \in \Phi_{J_0}} \la u, 2^{J_0} \tilde{\eta} \ra 2^{-J_0} \eta + \sum_{j  = J_0}^{\infty} \sum_{\eta_j \in \Psi_{j}} \la u, 2^j \tilde{\eta}_j \ra 2^{-j} \eta_j.
	\]
	By definition of $V^{wav}_{h}$, we have for $\mathring{u}_J \in V^{wav}_h$,
	\[
	u - \mathring{u}_J = I_1 + I_2, \quad
	I_1 := \sum_{j=J+1}^{\infty} \sum_{\eta_j \in \mathcal{Y}_j^c} \la u, 2^j \tilde{\eta}_j \ra 2^{-j} \eta_j \quad
	\text{and}
	\quad
	I_2:=\sum_{j=(2m-2)J}^{\infty} \sum_{\eta_j \in \mathcal{Y}_j}\la u, 2^j \tilde{\eta}_j \ra 2^{-j} \eta_j .
	\]
	Recall that $\mathcal{B}^{H^{1}_0(\Omega)}_{J_0}$ (from which the finite subset $\mathcal{B}^{S,H^{1}_0(\Omega)}_{J_0,J}$ is obtained) is a Riesz basis of $H^{1}_0(\Omega)$. By \eqref{Riesz:stab} (also see \cite[(4.6.9)]{hanbook}), there is a constant $C_3$ such that
	\begin{equation} \label{uuhring}
	\begin{aligned}
	\| u - \mathring{u}_h\|^2_{H^{1}(\Omega)} & = \|I_1 + I_2\|^2_{H^{1}(\Omega)}  \le C_3 \left( \sum_{j=J+1}^{\infty} \sum_{\tilde{\eta}_j \in \tilde{\mathcal{Y}}_j^c} |\la u, 2^j \tilde{\eta}_j \ra|^2 + \sum_{j=(2m-2)J}^{\infty} \sum_{\tilde{\eta}_j \in \tilde{\mathcal{Y}}_j} |\la u, 2^j \tilde{\eta}_j \ra|^2 \right)\\
	& \quad \le C_3 \left(2 C_\Gamma C_2^2 2^{-2(m-1)J} +  2 C_1^{2} C_{\tilde{\psi}} 2^{-2(m-1)(J+1)}\right) \left(\|u\|_{H^{m}(\Omega_+)} + \|u\|_{H^{m}(\Omega_-)}\right)^2\\
	& \quad \le C_4 2^{-2(m-1)J} \left(\|u\|_{H^{m}(\Omega_+)} + \|u\|_{H^{m}(\Omega_-)}\right)^2,
	\end{aligned}
	\end{equation}
	where $C_4 := C_3 \left(2 C_\Gamma C_2^2 +  2 C_1^{2} C_{\tilde{\psi}} 2^{-2(m-1)}\right)$, and we have used \eqref{sum:u:psitil:K}-\eqref{sum:u:psitil:Kc} to obtain the second inequality. Now, C\'{e}a's lemma states that there is a positive constant $C_a$, which depends only on the diffusion coefficient $a$, such that
	\[
	\|u -u_h \|_{H^{1}(\Omega)} \le C_a \inf_{v \in V^{wav}_h} \|u - v\|_{H^{1}(\Omega)}.
	\]
	Combined with \eqref{uuhring}, this implies that
	\begin{equation} \label{uuh:H1:bound}
	\begin{aligned}
	\|u -u_h \|_{H^{1}(\Omega)} & \le C_a \inf_{v \in V^{wav}_h} \|u - v\|_{H^{1}(\Omega)} \le C_a \|u - \mathring{u}_h\|_{H^{1}(\Omega)}\\
	& \le C_a \sqrt{C_4} 2^{-(m-1)J} \left(\|u\|_{H^{m}(\Omega_+)} + \|u\|_{H^{m}(\Omega_-)}\right).
	\end{aligned}
	\end{equation}
	Since $h = 2^{-J}$ and the cardinality of the finite subset used in the approximation at a given scale level $J$ is $N_J  = \mathscr{O}(2^{J})$, we have proved the desired $H^{1}(\Omega)$ convergence.
	
	We use the Aubin-Nitsche's technique to prove the $L^{2}(\Omega)$ convergence. Clearly, the bilinear form $B(u,v):=\la a u' , v' \ra$ is symmetric. Assume that $w \in H^{1}_0(\Omega)$ satisfies
	\[
	B(w,v) = \la u -u_h, v \ra, \quad v \in H^{1}_0(\Omega),
	\]
	while its approximate solution $w_h \in V^{wav}_h$ satisfies
	\[
	B(w_h,v_h) = \la u - u_h, v_h \ra, \quad v_h \in V^{wav}_h.
	\]
	Since the Galerkin orthogonality yields $B(w_h,u-u_h) = B(u - u_h, w_h) =0$ for $w_h \in V^{wav}_h$ and $v = u -u_h \in H^{1}_0(\Omega)$, it follows that
	\begin{equation} \label{uuh:L2:bound}
	\| u  - u_h \|^2_{L^{2}(\Omega)} = B(w,u-u_h) = B(w-w_h,u-u_h) \le C_B \|(w-w_h)'\|_{L^2(\Omega)} \|(u-u_h)'\|_{L^2(\Omega)}
	\end{equation}
	for some positive constant $C_B$, which only depends on $a$. By \eqref{uuh:H1:bound} with $m=2$, we have
	\[
	\|(w-w_h)'\|_{L^2(\Omega)} \le C_a \sqrt{C_4} 2^{-J} (\|w\|_{H^{2}(\Omega_+)} + \|w\|_{H^{2}(\Omega_-)}).
	\]
	Moreover, \eqref{stab:bound} states that there is a constant $C_5$ such that $\|w\|_{H^2(\Omega_-)} \le C_5 \| u - u_h\|_{L^2(\Omega)}$ and $\|w\|_{H^2(\Omega_+)} \le C_5 \| u - u_h\|_{L^2(\Omega)}$. From \eqref{uuh:H1:bound}-\eqref{uuh:L2:bound}, we have
	\[
	\| u  - u_h \|^2_{L^{2}(\Omega)} \le 2 C_B C_a^2 C_4 C_5 2^{-mJ} \| u - u_h\|_{L^2(\Omega)} (\|u\|_{H^{m}(\Omega_+)} + \|u\|_{H^{m}(\Omega_-)}).
	\]
	Cancelling a factor of $\| u  - u_h \|_{L^{2}(\Omega)}$ from both sides, and noting that $h = 2^{-J}$ and $N_J = \mathscr{O}(2^J)$, we obtain the desired conclusion.
	
	To prove the uniform boundedness of the condition number, we can use the same steps as in the proof of \cite[Theorem 2.2]{HM24}.
	\end{proof}	
	
	\section{Statement on the use of AI}
	OpenAI Codex (GPT-5.6 Sol) was only used to assist in adapting author-written MATLAB codes to handle multiple interface points in $\Gamma$ and to implement the 2D numerical example, to execute Examples 3 and 6, to plot Figure 2, and to proofread the manuscript. The authors independently reviewed and verified all AI-assisted outputs and take full responsibility for the entire content of the manuscript.

\end{document}